\documentclass[12pt,reqno]{amsart}
\usepackage{amsmath,amssymb}
\usepackage{xcolor}
\usepackage{graphicx}
\usepackage{latexsym,amssymb}
\usepackage{mathrsfs}
\usepackage[abbrev]{amsrefs}

\newtheorem{thm}{Theorem}[section]

\newtheorem{prop}[thm]{Proposition}
\newtheorem{lem}[thm]{Lemma}

 \newenvironment{pf}
    {{\noindent \bf Proof. }}{\hfill $\Box$}
\numberwithin{equation}{section}
\numberwithin{thm}{section}
\begin{document}

\begin{center}\large \bf 
Unique existence of solutions to the inviscid SQG equation in a critical space
%Local solutions and small global solutions 
%\\ 
%for critical SQG on a unit ball
\end{center}

\footnote[0]
{
{\it Mathematics Subject Classification}: 35Q35; 35Q86 

{\it 
Keywords}: 
quasi-geostrophic equation, 
critical Besov space, 
bounded domain

E-mail: t-iwabuchi@tohoku.ac.jp

}
\vskip5mm

\begin{center}
%{\sf 
Tsukasa Iwabuchi 
%}

\vskip2mm

Mathematical Institute, 
Tohoku University\\
Sendai 980-8578 Japan

\end{center}

\vskip5mm

\begin{center}
\begin{minipage}{135mm}
\footnotesize
{\sc Abstract. } 
We study the Cauchy problem for the surface quasi-geostrophic (SQG) equations in a two-dimensional bounded domain with the homogeneous Dirichlet boundary condition. We establish the unique existence of strong solutions in the critical Besov space $\dot B^2_{2,1}$, which is embedded in $C^1$. The proof is based on spectral localization using dyadic decomposition associated with the Dirichlet Laplacian. We obtain the solution by establishing uniform estimates for a sequence of solutions to the equation with a regularized nonlinear term.

\end{minipage}
\end{center}

%%%%%%%%%%%%%%%%%%%%%%%%%%%%%%%%%%%%%
%%%%%%%%%%%%%%%%%%%%%%%%%%%%%%%%%%%%%
\section{Introduction}

Let $\Omega \subset \mathbb R^2$ be a bounded domain with a smooth boundary. 
We consider the surface quasi-geostrophic equation: 
\begin{equation}\label{QG1}
\displaystyle 
 \partial_t \theta 
  + (u \cdot \nabla ) \theta  =0, 
  \quad u= \nabla ^{\perp} \Lambda _D ^{-1} \theta, 
 \qquad  t > 0 , x \in \Omega, 
\end{equation}
\begin{equation}\label{QG2}
 \theta(0,x) = \theta_0(x) , 
\qquad  x \in \Omega, 
\end{equation}
where 
$\nabla^\perp := (-\partial_{x_2} , \partial_{x_1})$, 
and $\Lambda_D$ denotes the square root of the Dirichlet Laplacian. 
 The equations are recognized as a fundamental model in geophysical fluid dynamics (see~\cite{La_1959,Pe_1979}). 
 The purpose of this paper is to establish the existence and uniqueness of strong solutions in the critical Besov space $\dot B^2_{2,1}(A_D)$ associated with the Dirichlet Laplacian $A_D$.

Let us recall the existing literature for the cases $\Omega = \mathbb R^2$ and $\Omega = \mathbb T^2$. 
The local-in-time existence of strong solutions in the Sobolev space $H^s_p(\mathbb R^2)$ holds for $s > 1 + 2/p$. This result is obtained by an approach similar to that used for the Euler equations, as demonstrated by Kato and Ponce \cite{KaPo-1988}. 
The exponent $s = 1 + 2/p$ corresponds to the critical regularity required to obtain a closed estimate for the nonlinearity; 
this regularity is considered optimal in the sense that the Sobolev space $H^s_p(\mathbb R^2)$ is embedded in $C^1(\mathbb R^2)$ only when $s > 1 + 2/p$. 
In the critical case $s = 1+ 2/p$, we can construct the strong solutions in the Besov spaces $\dot B^{1+d/p}_{p,1}(\mathbb R^2)$, which is obtained by an approach for the Euler equations by Vishik~\cite{Vis-1998,Vis-1999} Chae~\cite{Chae-2004}, Pak and Park~\cite{PaPa-2004}.

Regarding weak solutions, the existence of global weak solutions in $L^2$ was established by Resnick \cite{Resn_1995}, and the result was extended to the $L^p$ framework by Marchand \cite{Mar-2008}. 
Subsequently, the non-uniqueness of weak solutions was proved by Buckmaster, Shkoller, and Vicol \cite{BuShVi-2019} (see also Isett and Ma \cite{IsMa-2021}). 
The non-uniqueness of stationary weak solutions was further investigated by Cheng, Kwon, and Li \cite{ChKwLi-2021}.

In the critical case $s = 1 + 2/p$, the strong ill-posedness in Sobolev spaces for the Euler equations was established by Bourgain and Li \cite{BourLi-2015} (see also~\cite{BourLi-2021}). 
 They demonstrated the discontinuity of the solution map with respect to the initial data. 
 We also refer to the work of Elgindi and Jeong \cite{ElJ2-2017} for related results.
In the $W^{1,\infty}(\mathbb R^2)$ framework, Elgindi and Masmoudi \cite{ElMa-2020} proved the ill-posedness for the surafce quasi-geostrophic equations. 
More recently, Jeong and Kim \cite{JeKi-2024} demonstrated norm inflation in the critical space $H^2(\mathbb T^2) \cap W^{1,\infty}(\mathbb T^2)$ and showed the non-existence of a solution operator from $H^2(\mathbb T^2)$ to $L^\infty(0,T; H^2(\mathbb T^2))$. 
Furthermore, C\'ordoba, Mart\'inez-Zoroa, and O\.za\'nski \cite{CoMaOz-2025} constructed a smooth solution for initial data $\theta_0 \in H^s(\mathbb R^2)$ with $3/2 < s < 2$ such that the solution $\theta(t)$ belongs to $H^{\bar s}(\mathbb R^2)$ for $\bar s = s / (1+ct)$ for some $c > 0$, but fails to remain in $H^{\bar s + \varepsilon }(\mathbb R^2)$ for $\varepsilon > 0$.

We also recall several known results for domains with a boundary $\partial \Omega$. 
For a bounded domain $\Omega$ with a smooth boundary, the existence of global weak solutions in $L^2$ was established by Constantin and Ignatova \cite{CoNg-2018}. 
Constantin, Ignatova, and Nguyen \cite{CoIgNg-2018} subsequently studied the inviscid limit of the equations with fractional dissipation. 
The existence of strong solutions in $H^1_0(\Omega) \cap W^{2,p}(\Omega)$ for $1 < p < \infty$ was obtained in \cite{CoNg-2018-2}. 
Furthermore, Jeong, Kim, and Yao \cite{JeKiYa-2025} investigated the problem in the half-plane and established a non-existence theorem for $C^1$ solutions given initial data in $C^\infty_c(\overline{\mathbb R^2_+})$. 
A crucial feature of their result is that smooth solutions cannot exist if the initial data does not vanish on the boundary.

In this paper, we establish the existence and uniqueness of solutions in the space $\dot B^2_{2,1}(A_D)$, which is a subspace of $H^2(\Omega)$. When $\Omega$ is a bounded domain, results in such a critical space have not been previously established. While well-posedness in $\mathbb{R}^2$ is a classical result, its extension to bounded domains is far from a straightforward generalization. The breakdown of the Fourier-based Littlewood-Paley decomposition necessitates a delicate analysis based on the spectral theory for the Dirichlet Laplacian. This approach must account for not only the regularity but also the compatibility of the boundary conditions with the nonlinearity. The space $\dot B^2_{2,1}(A_D)$ is defined in terms of the Dirichlet Laplacian, and the solutions are constructed in the class $C([0,T]; \dot B^2_{2,1}(A_D))$. A precise definition of the Besov spaces $\dot B^s_{p,q}(A_D)$ associated with $A_D$, along with their fundamental properties, will be provided in Section 2.

 To define these spaces, let $\phi_0 \in C_c^\infty(\mathbb R)$ be a function such that 
 \[
 {\rm supp \, } \phi_0 \subset [2^{-1}, 2] \quad \text{and} \quad \sum_{j \in \mathbb Z} \phi_j(\lambda) = 1 \quad \text{for all } \lambda > 0,
 \]
 where we define 
 \begin{equation}\label{0304-2}
 	\phi_j(\lambda) := \phi_0 \left( \frac{\lambda}{2^j} \right) \quad \text{for } j \in \mathbb Z, \, \lambda \in \mathbb R.
 \end{equation}
 Let $A_D = -\Delta$ denote the Dirichlet Laplacian and let $\Lambda_D := \sqrt{A_D}$. 
 For $s \in \mathbb R$ and $1 \le p, q \le \infty$, the norm of the homogeneous Besov space $\dot B^s_{p,q}(A_D)$ is defined by 
 \[
 \| f \|_{\dot B^s_{p,q}(A_D)} := \left\{ \sum_{j \in \mathbb Z} \left( 2^{sj} \| \phi_j(\Lambda_D) f \|_{L^p} \right)^q \right\}^{1/q}.
 \]

\begin{thm}
\label{thm:1} 
Let $\theta_0 \in \dot B^2_{2,1}(A_D)$. Then, there exists a time $T > 0$ and a solution $\theta \in C([0,T], \dot B^2_{2,1}(A_D)) \cap C^1 ([0,T], \dot B^1_{2,1}(A_D))$ to the Cauchy problem \eqref{QG1}--\eqref{QG2}. 
Furthermore, the solution is unique in the class 
$C([0,T], W^{1,\infty}(\Omega)) \cap C^1 ([0,T], L^2)$. 

\end{thm}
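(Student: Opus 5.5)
We will construct the solution as a limit of solutions to a regularized problem and close uniform a priori bounds in $\dot B^2_{2,1}(A_D)$ using the spectral dyadic decomposition $\phi_j(\Lambda_D)$. For $N\in\mathbb N$ let $P_N:=\sum_{|j|\le N}\phi_j(\Lambda_D)$, and consider
\[
\partial_t\theta_N+P_N\big((u_N\cdot\nabla)P_N\theta_N\big)=0,\qquad u_N=\nabla^\perp\Lambda_D^{-1}P_N\theta_N,\qquad \theta_N(0)=P_N\theta_0 .
\]
Since $P_N$ projects onto a spectrally localized subspace (finite dimensional on a bounded domain, because the Dirichlet spectrum is discrete and bounded below), this is an ODE with a locally Lipschitz vector field. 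It therefore has a local solution, which persists as long as $\|\theta_N\|_{\dot B^2_{2,1}}$ stays bounded. The key structural fact is that $u_N$ is divergence free and tangent to the boundary: $\Lambda_D^{-1}P_N\theta_N$ vanishes on $\partial\Omega$, so $u_N\cdot n=\nabla^\perp\psi\cdot n=0$. Consequently $\int (u_N\cdot\nabla f)f\,dx=0$, and in particular the $L^2$ norm is conserved.

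The core step is the a priori estimate
\[
\frac{d}{dt}\|\theta_N\|_{\dot B^2_{2,1}}\le C\|\theta_N\|_{\dot B^2_{2,1}}^2,
\]
uniformly in $N$. Apply $\phi_j(\Lambda_D)$, take the $L^2$ inner product with $\phi_j(\Lambda_D)\theta_N$, and write the nonlinearity as
\[
\phi_j(\Lambda_D)(u\cdot\nabla\theta)=u\cdot\nabla\phi_j(\Lambda_D)\theta+[\phi_j(\Lambda_D),u\cdot\nabla]\theta .
\]
The first term vanishes by the cancellation above. The whole difficulty is then the commutator bound
\[
\sum_j 2^{2j}\|[\phi_j(\Lambda_D),u\cdot\nabla]\theta\|_{L^2}\le C\|\nabla u\|_{L^\infty\cap \dot B^1_{2,1}}\|\theta\|_{\dot B^2_{2,1}} .
\]
On $\mathbb R^2$ this is the Vishik/Chae commutator estimate. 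Here I would prove it with the Besov machinery of Section 2:
\begin{itemize}
\item the embedding $\dot B^2_{2,1}(A_D)\hookrightarrow C^1$;
\item boundedness of $\nabla\Lambda_D^{-1}$ from $\dot B^s_{2,1}(A_D)$ to $\dot B^s_{2,1}$-type spaces, so that $\|\nabla u\|_{\dot B^1_{2,1}}\lesssim\|\theta\|_{\dot B^2_{2,1}}$;
\item a paraproduct-type decomposition of $u\cdot\nabla\theta$ in terms of $\phi_k(\Lambda_D)u$ and $\phi_l(\Lambda_D)\theta$, together with almost-orthogonality bounds $\|\phi_j(\Lambda_D)(fg)\|$ from Gaussian/heat-kernel estimates for $A_D$.
\end{itemize}

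I expect this commutator/product estimate to be the main obstacle. Products of $A_D$-localized pieces need not satisfy the boundary conditions that membership in $\dot B^2_{2,1}(A_D)$ encodes at the $H^2$ level, namely $f|_{\partial\Omega}=0$; $\Delta f$ is unconstrained at this regularity. The plan is to verify that $u\cdot\nabla\theta$ vanishes on $\partial\Omega$, using $\theta=0$ there and $u$ tangential, so that $u\cdot\nabla\theta=(u\cdot n)\partial_n\theta+(u\cdot\tau)\partial_\tau\theta=0$. This places the nonlinearity in $\dot B^1_{2,1}(A_D)\cap H^1_0$ with the bound $\|u\cdot\nabla\theta\|_{\dot B^1_{2,1}}\lesssim\|\theta\|_{\dot B^2_{2,1}}^2$. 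It gives the time-derivative bound, and for the energy estimate one can argue at the level $\dot B^2_{2,1}$ via $\Lambda_D^2=-\Delta$ acting on $H^2\cap H^1_0$ pieces, using integration by parts whose boundary terms vanish by these conditions.

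Given the uniform bound on $[0,T]$ with $T\sim(C\|\theta_0\|_{\dot B^2_{2,1}})^{-1}$, the solutions $\theta_N$ are Cauchy in $C([0,T];L^2)$. This follows by the $L^2$ difference estimate below together with $\|(1-P_N)\theta\|\to0$. Interpolation then gives convergence in $C([0,T];\dot B^{2-\epsilon}_{2,1})$, the limit solves \eqref{QG1}--\eqref{QG2}, and $\theta\in L^\infty(0,T;\dot B^2_{2,1})$. Continuity in time with values in $\dot B^2_{2,1}$ follows from uniform smallness of the high-frequency tails, since the $\ell^1$ summability gives equicontinuity, by a Bona--Smith type argument. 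The $C^1([0,T];\dot B^1_{2,1})$ regularity comes from the equation.

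For uniqueness, let $\theta_1,\theta_2$ be in the stated class and put $\delta=\theta_1-\theta_2$. Then
\[
\partial_t\delta+u_1\cdot\nabla\delta+(u_1-u_2)\cdot\nabla\theta_2=0 .
\]
Pairing with $\delta$ in $L^2$, the transport term drops because $u_1$ is tangential and divergence free, and $\|u_1-u_2\|_{L^2}\lesssim\|\delta\|_{L^2}$ by $L^2$-boundedness of the Riesz transform $\nabla\Lambda_D^{-1}$. This gives
\[
\frac{d}{dt}\|\delta\|_{L^2}\le C\|\nabla\theta_2\|_{L^\infty}\|\delta\|_{L^2},
\]
and Gronwall concludes.
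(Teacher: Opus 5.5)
There is a genuine gap at the heart of your argument. Everything rests on the uniform bound $\sum_j 2^{2j}\|[\phi_j(\Lambda_D),u\cdot\nabla]\theta\|_{L^2}\lesssim\|\theta\|_{\dot B^2_{2,1}(A_D)}^2$. You do not prove it, and the route you sketch does not work as stated in the Dirichlet setting.

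The kernel of $\phi_j(\Lambda_D)$ is not translation invariant, so the Vishik--Chae representation of the commutator through $u(x)-u(y)$ is unavailable. Products of spectrally localized pieces are also not spectrally localized, so terms like $\phi_j(\Lambda_D)(u_l\cdot\nabla\theta_k)$ with $k,l\ll j$ do not vanish. Your proposed tools do not control them, for two reasons.

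First, $u=\nabla^\perp\Lambda_D^{-1}\theta$ has only its normal component vanishing on $\partial\Omega$. So $u$ and $\nabla u$ are in general not in $\dot B^s_{2,1}(A_D)$ for $s>1/2$; in particular $\|\nabla u\|_{\dot B^1_{2,1}(A_D)}$ is not controlled. A paraproduct built on $\phi_k(\Lambda_D)u$ creates boundary layers, so $\|\nabla\sum_{i\le k}\phi_i(\Lambda_D)u\|_{L^\infty}$ is not bounded by $\|\nabla u\|_{L^\infty}$. One must instead decompose $\theta$ and set $u_k=\nabla^\perp\Lambda_D^{-1}\phi_k(\Lambda_D)\theta$.

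Second, the vanishing of $u\cdot\nabla\theta$ on $\partial\Omega$ permits one lift $\phi_j(\Lambda_D)=A_D^{-1}\phi_j(\Lambda_D)A_D$. This only gives $2^{2j}\|\phi_j(\Lambda_D)(u_l\cdot\nabla\theta_k)\|_{L^2}\lesssim\|\Delta(u_l\cdot\nabla\theta_k)\|_{L^2}$. There is no decay in $j$, so the sum over $j>\max\{k,l\}$ diverges, and three derivatives land on $\theta_k$. Since $\Delta\theta$ and $\Delta(u\cdot\nabla\theta)$ need not vanish on $\partial\Omega$ (as you observe yourself), a second lift is impossible. ``Integration by parts whose boundary terms vanish'' is therefore not available at the $\dot B^2_{2,1}$ level.

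The paper gets around this with two ingredients your proposal lacks.

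The first is a resolvent-based localization. The paper applies $\Delta$ to the regularized equation and pairs it with $\psi_j(\Lambda_D)\Delta\theta$. Here $\psi_j(\Lambda_D)=(1+2^{-2j-2}A_D)^{-1}-(1+2^{-2j}A_D)^{-1}$ still gives an equivalent norm on $\dot B^2_{2,1}(A_D)$ (Lemma~\ref{lem:0128-3}~(2)). For the top-order term $(u\cdot\nabla)\Delta\theta$, it writes $\Delta\theta=(1-2^{-2j-2l}\Delta)\Theta_{j+l}$ with $\Theta_{j+l}=(1-2^{-2j-2l}\Delta_D)^{-1}\Delta\theta\in D(A_D)$.
\begin{itemize}
\item The $\Theta_{j+l}$ part cancels by the transport structure.
\item The $2^{-2j-2l}\Delta\Theta_{j+l}$ part is integrated by parts with vanishing boundary terms, because $\Theta_{j+l}$ and $u\cdot\nabla\Theta_{j+l}$ vanish on $\partial\Omega$.
\end{itemize}
This leaves only $\nabla u$ and $\Delta u$ against $\|\psi_j(\Lambda_D)^{1/2}\Delta\theta\|_{L^2}^2$, with no derivative loss.

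The second is a low-regularity bilinear estimate. The lower-order terms $(\Delta u\cdot\nabla)\theta$ and $\partial_{x_i}((\partial_{x_i}u\cdot\nabla)\theta)$ violate the Dirichlet condition. They are estimated in $\dot B^\gamma_{2,1}(A_D)$ with $0<\gamma<1/2$ (Lemma~\ref{lem:0127-1}, Proposition~\ref{prop:1121-1}), where boundary values do not matter. This supplies the extra factor $2^{-\gamma j}$ that makes the low--low sums converge.

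The rest of your plan is sound and parallels the paper: Galerkin truncation in place of the resolvent regularization $N_\mu$, compactness, time continuity from the Chemin--Lerner tail bound, and $L^2$--Gronwall uniqueness. But without a replacement for these two ingredients, the uniform $\dot B^2_{2,1}$ estimate, and hence existence, is not established.
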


\noindent 
{\bf Remark. }  (1) 
Due to the properties of the Dirichlet Laplacian, the boundary condition $\theta|_{\partial \Omega} \equiv 0$ is preserved as long as the solution exists. 
This is the first result to establish the unique existence of solutions in a critical space embedded in $C^1(\Omega)$, where $\Omega$ is bounded. 
Furthermore, the non-existence result in the half-plane \cite{JeKiYa-2025} for $\theta_0|_{\partial \mathbb R^2_+} \not\equiv 0$ suggests that maintaining the Dirichlet boundary condition for all $t > 0$ is essential for the existence of smooth solutions.

 \vskip2mm 
 
\noindent 
(2) In Theorem~\ref{thm:1}, $W^{1,\infty}(\Omega)$ denotes the standard Sobolev space consisting of bounded functions whose first-order derivatives are also bounded. 
It is worth noting that the uniqueness result in Theorem~\ref{thm:1} holds for functions regardless of their boundary behavior. 
Furthermore, the solution space $C([0,T]; \dot B^2_{2,1}(A_D))$ is embedded in the class $C([0,T]; W^{1,\infty}(\Omega)) \cap C^1([0,T]; L^2(\Omega))$, where uniqueness is established.

\vskip2mm 

\noindent 
(3) 
The following approximation property holds for solutions with higher regularity. For $u \in \dot{B}^{2}_{2,1}(A)$, there exists a constant $C > 0$ such that
\[
\left\| u - \sum_{j \leq J} \phi_j (\sqrt{A}) u \right\|_{L^2} 
\leq C 2^{-2 J}
\]
for all $J \in \mathbb{N}$. 
Furthermore, since the spectrum of the Dirichlet Laplacian on a bounded domain consists of eigenvalues, the partial sum above is a finite linear combination of the corresponding eigenfunctions.

\vskip3mm 

Let us give several comments on the proof of Theorem~\ref{thm:1}. Instead of the original nonlinear term $(u \cdot \nabla)\theta$ with $u = \nabla^\perp \Lambda_D^{-1} \theta$, we consider a regularized term:
\[
N_\mu (\theta, \theta ) := (1+ \mu A_D)^{-1} 
\Big( \left( \nabla^\perp \Lambda_D^{-1}\theta \cdot \nabla \right) (1+\mu A_D)^{-1} \theta \Big), \quad \mu > 0.
\]
Using this, we construct a sequence of solutions in $C([0,T]; \dot B^2_{2,1}(A_D))$ (see Proposition~\ref{prop:1010-5}). 
In addition, we establish uniform boundedness with respect to the parameter $\mu > 0$ in a stronger topology of $L^\infty(0,T; \dot B^2_{2,1}(A_D))$. 
By taking a subsequence that converges in the weak-$*$ sense, we obtain a local-in-time solution.
To this end, Lemma~\ref{lem:1126-5} provides nonlinear estimates using spectral localization via dyadic decomposition, which is essential to avoid derivative loss. 
In this domain setting, we utilize a spectral restriction based on the resolvent:
\begin{equation}\label{0126-1}
	\psi _j (\Lambda _D) 
	:= (1+ 2^{-2j-2} A_D)^{-1} - (1+2^{-2j}A_D)^{-1},
\end{equation}
instead of the standard smooth functions with compact support. This resolvent-based approach is a powerful tool for handling such problems in bounded domains. 
We refer to \cite{Iw-2022}, which introduced this method for commutator estimates. In the present paper, we use $\psi_j(\Lambda_D)^{1/2}$ to characterize an equivalent norm for the Besov space $\dot B^2_{2,1}(A_D)$ (see Lemma~\ref{lem:0128-3}).

\bigskip 

This paper is organized as follows. 
In section 2, we recall the definition of Besov spaces associated with the Dirichlet 
Laplacian, and several properties for the boundary value of functions, and bilinear estimates  with derivatives. 
In section 3, we prove Theorem~\ref{thm:1}.

\bigskip

\noindent 
{\bf Notations.} 
For $k\in \mathbb N$, $W^{k,p}(\Omega)$ is the standard Sobolev space without any boundary condition and consists of all $f$ such that $f $ and its all the derivatives belong to $ L^p$. 
$H^k(\Omega)$ is $W^{k,p}(\Omega)$ for $p=2$, and 
$H^1_0 (\Omega)$ is the completion of $C_0^\infty (\Omega)$ in $H^1 (\Omega)$. 
$\dot B^s_{p,q}(A_D)$ is the Besov space associated with the Dirichlet Laplacian. 
$\{\phi_j \}_{j \in\mathbb Z}$ is defined by \eqref{0304-2} and 
$
S_j := \sum _{k \leq j} \phi_j(\Lambda _D)
$. 
$\psi_j(\Lambda _D)$ is the spectral restriction operator defined with the resolvent by $\psi _j (\Lambda _D) 
=: (1+ 2^{-2j-2} A_D)^{-1} - (1+2^{-2j}A_D)^{-1}
$.

\section{Preliminaries}

This section provides several propositions and lemmas that are essential for the proof of Theorem~\ref{thm:1}. 
In Subsection 2.1, we recall the definition of Besov spaces associated with the Dirichlet Laplacian and summarize their fundamental properties. 
Subsection 2.2 is devoted to the study of Besov spaces with low regularity indices, specifically the case where smooth functions belong to these spaces regardless of their boundary values. 
In Subsection 2.3, we establish product estimates in Besov spaces for functions and their derivatives. These estimates are particularly tailored for the low regularity case, where the functions do not necessarily satisfy the Dirichlet boundary condition.

\subsection{Besov spaces}\label{subsec:besov}

We recall the definition and basic properties of Besov spaces associated with the Dirichlet Laplacian (see~\cite{IMT-2019}). Let $A_D = -\Delta_D$ be the Dirichlet Laplacian on $L^2$ defined by
\[
\begin{cases}
	D(A_D) := \{ f \in H^1_0(\Omega) \mid \Delta f \in L^2 \}, \\
	A_D f := -\Delta f, \quad f \in D(A_D).
\end{cases}
\]
We define the operator 
\[
\Lambda_D := \sqrt{A_D}. 
\]
Since we consider a bounded domain $\Omega$ with the Dirichlet boundary condition, the infimum of the spectrum of $A_D$ is strictly positive. This positivity implies that the homogeneous and non-homogeneous Besov spaces are equivalent as sets and possess equivalent norms. For simplicity of notation, we adopt the homogeneous framework throughout this paper.
Let $\{ \phi_j \}_{j \in \mathbb Z}$ be defined by \eqref{0304-2}. 

\vskip3mm 

\noindent {\bf Definition.} 
(i) Let $\mathcal{Z}_D$ be the space of test functions defined by
\[
\mathcal{Z}_D := \{ f \in L^1 \cap D(A_D) \mid q_m(f) < \infty \text{ for all } m \in \mathbb{N} \},
\]
where the semi-norms $q_m$ are given by
\[
q_m(f) := \sup_{j \in \mathbb{Z}} 2^{m|j|} \| \phi_j(\Lambda_D) f \|_{L^1(\Omega)}.
\]
(ii) Let $\mathcal{Z}_D'$ be the topological dual of $\mathcal{Z}_D$.

\bigskip 

Although the operator $A_D$ is initially defined on $L^2$, the boundedness properties on $L^p$ are well-established. These properties allow us to justify the definition of $\mathcal{Z}_D$ as a Fr{\'e}chet space and to employ the functional calculus on $\mathcal{Z}_D$ and $\mathcal{Z}_D'$ (see \cite{IMT-2019}). The following lemma provides the boundedness of the spectral restriction operator $\phi_j(\Lambda_D)$ on Lebesgue spaces.

\begin{lem}{\rm(}\cite{FuIw-preprint,IMT-2018}{\rm)} \label{lem:0128-1}
	Let $1 \leq r \leq p \leq \infty$. There exists a positive constant $C $ such that
	\[ 
	\| \phi_j(\Lambda_D) \|_{L^r \to L^p} \leq C 2^{2\left(\frac{1}{r}-\frac{1}{p}\right)j} 
	\]
	for all $j \in \mathbb{Z}$. Furthermore, for $\alpha = 1, 2$, it holds that
	\[ 
	\| \nabla^\alpha \phi_j(\Lambda_D) \|_{L^r \to L^p} \leq C 2^{\alpha j + 2\left(\frac{1}{r}-\frac{1}{p}\right)j} 
	\]
	for all $j \in \mathbb{Z}$.
\end{lem}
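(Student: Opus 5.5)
The plan is to reduce everything to pointwise kernel bounds of Gaussian type for the Dirichlet heat semigroup $e^{-tA_D}$ on the smooth bounded domain $\Omega$, and then pass to $\phi_j(\Lambda_D)$ through functional calculus and a scaling in $t=2^{-2j}$.

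First, the low frequencies. The spectrum of $A_D$ is contained in $[\lambda_1,\infty)$ with $\lambda_1>0$, so $\phi_j(\Lambda_D)=0$ whenever $2^{j+1}<\sqrt{\lambda_1}$. Only finitely many $j\le 0$ therefore contribute. For each of these, $\phi_j(\Lambda_D)=e^{-A_D}\,[e^{A_D}\phi_j(\Lambda_D)]$. The bracket is bounded on $L^2$, and the outer factors can be arranged, by splitting $e^{-A_D}$ into pieces, so that the operator maps $L^1\to L^2$ and $L^2\to L^\infty$. The same holds with $\nabla^\alpha$ in front, using smoothing of $e^{-tA_D}$ into $W^{2,\infty}$ for $t\sim 1$. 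This gives a $j$-independent constant, which is harmless because there are finitely many such $j$.

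Second, the high frequencies $2^j\ge 1$, with $\theta:=2^{-j}\le 1$. I write $\phi_0(\lambda)=g(\lambda^2)e^{-\lambda^2}$ with $g\in C_c^\infty(0,\infty)$, so that
\[
\phi_j(\Lambda_D)=\int_{\mathbb R}\widehat g(\tau)\,e^{-(1-i\tau)\theta^2A_D}\,d\tau .
\]
Davies' complex-time Gaussian bounds for the Dirichlet heat kernel give
\[
|p_z(x,y)|\le C\,(\mathrm{Re}\,z)^{-1}\Big(\tfrac{|z|}{\mathrm{Re}\,z}\Big)^{M}e^{-c\,\mathrm{Re}\,(|x-y|^2/z)} .
\]
These follow from the real Gaussian bound $|p_t(x,y)|\le Ct^{-1}e^{-c|x-y|^2/t}$ via the Phragmén–Lindelöf argument. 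Combining this with the rapid decay of $\widehat g$ yields
\[
|K_j(x,y)|\le C_N\,2^{2j}(1+2^j|x-y|)^{-N}
\]
for the kernel $K_j$ of $\phi_j(\Lambda_D)$. Then $\sup_x\|K_j(x,\cdot)\|_{L^s}+\sup_y\|K_j(\cdot,y)\|_{L^s}\lesssim 2^{2j(1-1/s)}$, and Young's inequality with $1+1/p=1/s+1/r$ gives the first estimate.

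For the derivatives I factor
\[
\nabla^\alpha\phi_j(\Lambda_D)=\big[\nabla^\alpha e^{-\theta^2A_D/2}\big]\circ\big[e^{\theta^2A_D/2}\phi_j(\Lambda_D)\big].
\]
The second factor is $\tilde\phi_0(2^{-j}\Lambda_D)$ with $\tilde\phi_0(\lambda)=e^{\lambda^2/2}\phi_0(\lambda)$, which is again smooth and compactly supported away from $0$. By the previous step it is bounded $L^r\to L^p$ with norm $\lesssim 2^{2(1/r-1/p)j}$. It then suffices to prove that $\nabla^\alpha e^{-tA_D}$ is bounded on $L^p$ for every $1\le p\le\infty$, with norm $\lesssim t^{-\alpha/2}$ for $0<t\le1$. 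This follows from the kernel bound
\[
|\nabla_x^\alpha p_t(x,y)|\le Ct^{-1-\alpha/2}e^{-c|x-y|^2/t},\qquad \alpha=1,2,\ 0<t\le 1 .
\]

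The main obstacle is this last gradient and Hessian heat kernel bound uniformly up to the boundary, particularly at the endpoints $p=1,\infty$ where elliptic $L^p$ regularity ($\|\nabla^2f\|_p\lesssim\|A_Df\|_p+\|f\|_p$) fails. I would obtain it by localizing and flattening the smooth boundary. Parabolic Schauder and $C^{2+\gamma}$ estimates up to the boundary, applied to $p_\cdot(\cdot,y)$ on parabolic cylinders of size $\sqrt t$, control the derivatives by the sup of $p$ on a slightly larger cylinder. That sup is Gaussian-small by the real Gaussian bound. Scaling the cylinder to unit size, which is legitimate since $\sqrt t\le1$ and the boundary is uniformly $C^\infty$ at that scale, gives the factor $t^{-\alpha/2}$.
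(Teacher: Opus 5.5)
The paper gives no proof of this lemma; it quotes it from the two works cited in the statement. Your argument therefore replaces a citation rather than competing with an in-text proof, and it is correct.

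Your route is standard and self-contained:
\begin{itemize}
\item Domain monotonicity gives the real Gaussian bound $0\le p_t(x,y)\le (4\pi t)^{-1}e^{-|x-y|^2/(4t)}$ for every $t>0$.
\item Davies' complex-time extension, combined with your superposition of $e^{-(1-i\tau)2^{-2j}A_D}$ against the rapidly decaying $\widehat g$, gives the scaled kernel bound $C_N2^{2j}(1+2^j|x-y|)^{-N}$. Young's inequality then yields the $\alpha=0$ estimate.
\item The factorization through $e^{-2^{-2j}A_D/2}$ reduces the derivative estimates to small-time bounds for $\nabla_x^\alpha p_t$ up to $\partial\Omega$. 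These are the classical Green-function estimates for the heat equation in smooth domains.
\end{itemize}

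What your route buys over the citation is transparency about where each hypothesis is used.
\begin{itemize}
\item The $\alpha=0$ estimate uses no boundary regularity at all. Since the Gaussian bound holds for every $t>0$, your low/high frequency split is only needed for the derivative part, where you state the heat kernel bounds for $t\le 1$.
\item The derivative estimates, in particular $\alpha=2$ and large $p$, are exactly where the smoothness of $\partial\Omega$ enters, through boundary Schauder theory. They fail, for instance, on domains with reentrant corners.
\end{itemize}
The cost is that you rely on two substantial external inputs: Davies' theorem, and uniform boundary Schauder estimates, which you only sketch.

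One detail in that sketch needs adjusting. Both parabolic cylinders around $(t,x)$ should have radius $c\sqrt t$ with $c$ small, so that they stay inside a time window such as $[t/2,t]$. A cylinder of radius $\sqrt t$ ending at time $t$ reaches down to $s=0$, where $p_s(\cdot,y)$ is unbounded near $y$. Its supremum is then not controlled by $Ct^{-1}e^{-c'|x-y|^2/t}$, which is the bound your scaling argument needs.
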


We then define Besov spaces associated with the Dirichlet Laplacian on $\Omega$. 

\vskip2mm

\noindent 
{\bf Definition}. (Besov spaces)
Let $s \in \mathbb R$ and $1 \leq p,q \leq \infty$. 
$\dot B^s_{p,q} = \dot B^s_{p,q}(A_D)$ is defined by 
\[
\dot B^s_{p,q}(A_D) :=  
\{ f \in \mathcal Z_D' \, | \, 
\| f \|_{\dot B^s_{p,q}(A_D) } < \infty\}, 
\]
where 
\[
\| f \|_{\dot B^s_{p,q}(A_D)} 
:= 
\Big\|  
\Big\{ 2^{sj} \| \phi_j(\Lambda _D) f \|_{L^p(\mathbb R^2_+)} 
\Big\}_{j \in \mathbb Z}
\Big\| _{\ell^q(\mathbb Z)}.
\]

\bigskip 

It was established in \cite{IMT-2019} that $\dot{B}^s_{p,q}(\Lambda_D)$ is a Banach space and satisfies standard properties, such as lift properties and Sobolev-type embedding theorems, similar to those in the whole space case. We list below several properties that are required for our proof.

\begin{prop}\label{prop:0128-2} Let $s \in \mathbb R$ and $1\leq p ,q \leq \infty $. 
\begin{enumerate}
	\item $\dot B^s_{p,q}(A_D)$ is a Banach space. 
	\item Let $1 \leq p,q < \infty$, $1 < p',q' \leq \infty$ and $1/  p+ 1/p' = 1/q+q/q' = 1$.  
		The dual space of $\dot B^s_{p,q}(A_D) $ is $ \dot B^{-s}_{p',q'}(A_D)$. 
	\item Let $r \leq p$. Then 
	$\dot B^{s+2(\frac{1}{r}-\frac{1}{p})}_{r,q}(A_D) 
	\hookrightarrow \dot B^s_{p,q}(A_D)$. 
	\item 
	Let $s_0 > 0$ and $1 \leq q_0 \leq \infty$. Then 
	$\dot B^{s+s_0}_{p,q_0}(A_D) \hookrightarrow \dot B^s_{p,q}(A_D)$. 
\end{enumerate}
\end{prop}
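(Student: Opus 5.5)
We prove the four items separately. Two facts are used throughout. The first is the almost-orthogonality $\phi_j\phi_k\equiv 0$ for $|j-k|\ge 2$. With $\Phi_j:=\phi_{j-1}+\phi_j+\phi_{j+1}$ it gives $\phi_j(\Lambda_D)=\Phi_j(\Lambda_D)\phi_j(\Lambda_D)$. The second is the Littlewood--Paley identity $f=\sum_{j}\phi_j(\Lambda_D)f$ in $\mathcal Z_D'$, which we take from the functional calculus of \cite{IMT-2019}.

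We also use the spectral gap. Since $\Omega$ is bounded, $\inf\sigma(A_D)=\lambda_1>0$, and $\operatorname{supp}\phi_0\subset[2^{-1},2]$. Hence $\phi_j(\Lambda_D)=0$ whenever $2^{j+1}<\sqrt{\lambda_1}$, so only indices $j\ge j_0$ occur in every sum, for some fixed $j_0\in\mathbb Z$.

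\textbf{Item (1).} Let $\{f_n\}$ be Cauchy in $\dot B^s_{p,q}(A_D)$. For each $j$, the sequence $\phi_j(\Lambda_D)f_n$ is Cauchy in $L^p$ and converges to some $g_j$. We must produce a limit in $\mathcal Z_D'$. For $h\in\mathcal Z_D$ write
\[
\langle f_n,h\rangle=\sum_{j}\langle \phi_j(\Lambda_D)f_n,\Phi_j(\Lambda_D)h\rangle .
\]
Lemma~\ref{lem:0128-1} with $r=1$ gives $\|\Phi_j(\Lambda_D)h\|_{L^{p'}}\le C2^{2j/p}\|\Phi_j(\Lambda_D) h\|_{L^1}\le C2^{2j/p-m|j|}q_m(h)$. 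Choosing $m>|s|+2$, the series converges uniformly in $n$, and $\langle f_n,h\rangle$ is Cauchy. The limit $f$ is continuous on $\mathcal Z_D$ because it is dominated by a finite combination of the seminorms $q_m$. One then checks $\phi_j(\Lambda_D)f=g_j$. Fatou's lemma in $\ell^q$ gives both $f\in\dot B^s_{p,q}$ and $f_n\to f$ in norm.

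\textbf{Item (3).} Write $\phi_j(\Lambda_D)f=\phi_j(\Lambda_D)\Phi_j(\Lambda_D)f$ and apply Lemma~\ref{lem:0128-1}:
\[
\|\phi_j(\Lambda_D)f\|_{L^p}\le C2^{2(\frac1r-\frac1p)j}\|\Phi_j(\Lambda_D)f\|_{L^r}.
\]
Multiply by $2^{sj}$ and take the $\ell^q$ norm. The three shifts in $\Phi_j$ only cost a constant factor.

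\textbf{Item (4).} Since $j\ge j_0$, we have $2^{sj}\|\phi_j(\Lambda_D) f\|_{L^p}\le 2^{-s_0j}\|f\|_{\dot B^{s+s_0}_{p,q_0}}$. The sequence $\{2^{-s_0 j}\}_{j\ge j_0}$ lies in every $\ell^q$ because $s_0>0$. This gives the embedding for any $q,q_0$. Without the spectral gap this step would fail, since the sum over $j\to-\infty$ would diverge.

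\textbf{Item (2), the main obstacle.} Define the pairing
\[
\langle f,g\rangle:=\sum_j\langle\phi_j(\Lambda_D)f,\Phi_j(\Lambda_D)g\rangle .
\]
H\"older in $x$ and in $j$ bounds it by $C\|f\|_{\dot B^s_{p,q}}\|g\|_{\dot B^{-s}_{p',q'}}$, so $\dot B^{-s}_{p',q'}$ embeds into the dual. Surjectivity is the harder part. Given $\ell\in(\dot B^s_{p,q})^*$, map $f\mapsto(\phi_j(\Lambda_D)f)_j$ isometrically into the weighted space $\ell^q_s(L^p)$. Extend $\ell$ by Hahn--Banach, and use $p,q<\infty$ to represent the extension by a sequence $(g_j)\in\ell^{q'}_{-s}(L^{p'})$. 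Set $g:=\sum_j\phi_j(\Lambda_D)g_j$, which converges in $\mathcal Z_D'$ by the same seminorm argument as in (1). Then $\|\phi_k(\Lambda_D)g\|_{L^{p'}}\le C\sum_{|j-k|\le1}\|g_j\|_{L^{p'}}$ by Lemma~\ref{lem:0128-1}. Hence $g\in\dot B^{-s}_{p',q'}$, and $\ell=\langle\cdot,g\rangle$. The delicate points are the convergence of these series in $\mathcal Z_D'$ and the self-adjointness of $\phi_j(\Lambda_D)$ used to move operators across the pairing. These rely on the $L^1$–$L^\infty$ bounds of Lemma~\ref{lem:0128-1} and on the structure of $\mathcal Z_D$ from \cite{IMT-2019}.
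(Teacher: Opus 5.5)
Your proposal is correct. It is also more self-contained than the paper, which does not prove items (1)--(3) at all: it refers to \cite{IMT-2019} for completeness, duality and the embeddings. For (4) the paper only remarks that the claim follows from the corresponding embedding for inhomogeneous Besov spaces, together with the equivalence of homogeneous and inhomogeneous norms that holds because $\inf\sigma(A_D)>0$.

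You instead derive everything directly from Lemma~\ref{lem:0128-1}, the identity $\phi_j(\Lambda_D)=\Phi_j(\Lambda_D)\phi_j(\Lambda_D)$ and the resolution of identity:
\begin{itemize}
\item completeness, by defining the limit on $\mathcal Z_D$ through the seminorms $q_m$ and closing with Fatou;
\item (3), by a Bernstein-type bound;
\item (2), by the isometric map $f\mapsto(\phi_j(\Lambda_D)f)_j$ into $\ell^q_s(L^p)$, Hahn--Banach, and resynthesis $g=\sum_j\phi_j(\Lambda_D)g_j$;
\item (4), by noting that $\phi_j(\Lambda_D)=0$ for $j<j_0$, so $\{2^{-s_0j}\}_{j\ge j_0}\in\ell^q$.
\end{itemize}
For (4) this is the same mechanism as the paper's (the spectral gap removes the low-frequency tail), without the detour through inhomogeneous spaces. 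Your route shows exactly where the boundedness of $\Omega$ and the restriction $p,q<\infty$ enter. The paper's route is shorter and defers these details to the general theory of \cite{IMT-2019}.

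Two details need tightening.
\begin{itemize}
\item The inequality $\|\Phi_j(\Lambda_D)h\|_{L^{p'}}\le C2^{2j/p}\|\Phi_j(\Lambda_D)h\|_{L^1}$ is not literally Lemma~\ref{lem:0128-1}. You obtain it by writing $\phi_k(\Lambda_D)h=\Phi_k(\Lambda_D)\phi_k(\Lambda_D)h$ for each $|k-j|\le1$ and applying the lemma to each piece.
\item In (2), boundedness of the pairing alone does not give an embedding; you also need injectivity of $g\mapsto\langle\cdot,g\rangle$. This holds because $\mathcal Z_D\subset\dot B^s_{p,q}(A_D)$, and on $\mathcal Z_D$ your pairing coincides with the $\mathcal Z_D$--$\mathcal Z_D'$ duality. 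With injectivity, your construction gives $\|g\|_{\dot B^{-s}_{p',q'}(A_D)}\le C\|\langle\cdot,g\rangle\|$, so the identification is an isomorphism of Banach spaces.
\end{itemize}
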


\noindent 
{\bf Remark. } 
The embedding (4) in Proposition~\ref{prop:0128-2} follows from the corresponding embedding for non-homogeneous Besov spaces, combined with the equivalence between the homogeneous and non-homogeneous types. This equivalence is ensured by the positivity of the infimum of the spectrum of $A_D$.

\begin{lem}\label{lem:0128-3}
\begin{enumerate}	
	\item Let	$\alpha = 0, 1,2$ and $1 \leq r \leq p \leq \infty$. A positive constant $C$ exists such that 
\[
\| \nabla ^\alpha f \|_{L^p} \leq C \| f \|_{\dot B^{\alpha +2(\frac{1}{r}-\frac{1}{p})}_{r,1}(A_D)}
\quad \text{for all } f \in \dot B^{2(\frac{1}{r}-\frac{1}{p})}_{r,1}(A_D). 
\]

	\item  Let $\psi_j(\Lambda_D)$ be defined by \eqref{0126-1}. 
	Suppose that  $|s-2| < 1$ and $1 \leq p,q \leq \infty$. 
	For every $f \in \dot B^s_{p,q}(A_D)$, we have the following norm equivalence. 
	\[
	\| f \|_{\dot B^s_{p,q}(A_D)} \simeq 
	\Big\{  \sum _{j \in \mathbb Z} \Big(  2^{(s-2)j}\| \psi_j(\Lambda _D)^{\frac{1}{2}} \Delta f \|_{L^p} \Big)
	\Big\}^{\frac{1}{q}}. 
	\]
\end{enumerate}  
\end{lem}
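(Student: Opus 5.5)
Part (1) reduces to a Bernstein-type estimate on each dyadic block, summed using the $\ell^1$ structure of the $q=1$ norm; part (2) is an almost-orthogonality argument comparing the resolvent-based restrictions $\psi_j(\Lambda_D)^{1/2}\Delta$ with the smooth restrictions $\phi_k(\Lambda_D)$.

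\emph{Part (1).} Write $f=\sum_j \phi_j(\Lambda_D)f$. This series converges in $\mathcal Z_D'$, and because the spectrum of $A_D$ is bounded below it also converges in $L^p$ once the block norms are summable. Choose $\tilde\phi_j=\phi_{j-1}+\phi_j+\phi_{j+1}$, so that $\phi_j=\tilde\phi_j\phi_j$. By Lemma~\ref{lem:0128-1},
\[
\|\nabla^\alpha \phi_j(\Lambda_D) f\|_{L^p}=\|\nabla^\alpha\tilde\phi_j(\Lambda_D)\phi_j(\Lambda_D)f\|_{L^p}\le C2^{\alpha j+2(\frac1r-\frac1p)j}\|\phi_j(\Lambda_D)f\|_{L^r}.
\]
Summing over $j$ by the triangle inequality gives the claim. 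The case $\alpha=0$ is included, and one must check that $\nabla^\alpha$ commutes with the limit; this follows from closedness of $\nabla^\alpha$ on $L^p$.

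\emph{Part (2), setup.} Write $\psi_j(\lambda)$ for the scalar function with $\psi_j(\Lambda_D)$ its functional calculus, and note that $-\Delta f=A_Df$. Explicitly,
\[
\psi_j(\lambda)=\frac{1}{1+2^{-2j-2}\lambda^2}-\frac{1}{1+2^{-2j}\lambda^2}=\frac{\tfrac34 2^{-2j}\lambda^2}{(1+2^{-2j-2}\lambda^2)(1+2^{-2j}\lambda^2)}.
\]
Hence $\psi_j(\lambda)^{1/2}\lambda^2=2^{2j}m(2^{-j}\lambda)$ with
\[
m(\mu)=\frac{\sqrt3}{2}\,\frac{\mu^3}{\bigl((1+\mu^2/4)(1+\mu^2)\bigr)^{1/2}}.
\]
This multiplier behaves like $\mu^3$ near $0$ and like $\mu$ at infinity.

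\emph{Part (2), first inequality.} I would prove $\lesssim$ by decomposing $\psi_j^{1/2}(\Lambda_D)\Delta f=\sum_k \psi_j^{1/2}(\Lambda_D)\Delta\tilde\phi_k(\Lambda_D)\phi_k(\Lambda_D)f$. The operator $2^{-2j}m(2^{-j}\Lambda_D)\tilde\phi_k(\Lambda_D)$ is a smooth, compactly supported dilated multiplier. The $L^p$-boundedness results underlying Lemma~\ref{lem:0128-1} (Gaussian heat kernel bounds, as in \cite{IMT-2018}) give
\[
\|\psi_j^{1/2}\Delta\tilde\phi_k(\Lambda_D)\|_{L^p\to L^p}\le C2^{2k}\min\bigl(2^{3(k-j)},2^{j-k}\bigr).
\]
Multiplying by $2^{(s-2)j}$, the block $k$ contributes $2^{sk}\|\phi_kf\|_{L^p}$ times the kernel $2^{(s-2)(j-k)}\min(2^{3(k-j)},2^{j-k})$. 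This kernel is summable in $j-k$ exactly when $s-2<1$ and $s-2>-1$, that is, $|s-2|<1$. Young's inequality on $\mathbb Z$ then yields the $\ell^q$ bound.

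\emph{Part (2), reverse inequality.} Write
\[
\phi_k(\Lambda_D)f=\sum_j \phi_k(\Lambda_D)\,\Lambda_D^{-2}\,\psi_j^{1/2}(\Lambda_D)\,\bigl(\psi_j^{1/2}(\Lambda_D)\Delta f\bigr)\Big/\Big(\sum_i\psi_i(\Lambda_D)\Big).
\]
The sum $\sum_i\psi_i(\lambda)$ telescopes to $1$ for $\lambda>0$ (with limits $1$ and $0$), so the denominator is identically $1$. Bounding $\|\phi_k\Lambda_D^{-2}\psi_j^{1/2}\|_{L^p\to L^p}\lesssim 2^{-2k}\min(2^{3(k-j)},2^{j-k})$ gives the same convolution kernel structure. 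The same Young argument then applies under $|s-2|<1$.

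\emph{Main obstacle.} The hard step is the $L^p$ operator bound for the non-compactly supported multiplier $\psi_j^{1/2}$ composed with $\phi_k$, uniformly including $p=1,\infty$. I would handle it by factoring out the decay explicitly. For $k\le j$, write $m(2^{-j}\lambda)\tilde\phi_k(\lambda)=2^{3(k-j)}\,g(2^{-k}\lambda)$, where $g$ is smooth and compactly supported uniformly in $j-k\ge0$. For $k>j$, one instead uses $2^{j-k}$ decay with a Schwartz-type symbol. I would then invoke the uniform $L^p$ bound for $g(2^{-k}\Lambda_D)$ over a bounded family of symbols in $C^N_c$, which is what the proof of Lemma~\ref{lem:0128-1} provides.
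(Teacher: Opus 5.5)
Part (1) is the paper's argument, and in part (2) your proof that the $\psi_j$-quantity is $\lesssim\|f\|_{\dot B^s_{p,q}(A_D)}$ is the paper's ``converse inequality'': insert $\Phi_k(\Lambda_D)\phi_k(\Lambda_D)$, bound each operator on $L^p$, and apply Young's inequality on $\mathbb Z$.

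There is an exponent slip you need to fix. Since $\psi_j(\lambda)^{1/2}\lambda^2=2^{2j}m(2^{-j}\lambda)$, the prefactor is $2^{2j}$, not $2^{2k}$. For $k\le j$ this gives $2^{2j}2^{3(k-j)}=2^{2k}2^{k-j}$. The correct bound is therefore $\|\psi_j(\Lambda_D)^{1/2}\Delta\tilde\phi_k(\Lambda_D)\|_{L^p\to L^p}\le C2^{2k}2^{-|j-k|}$, as in the paper. Likewise, $\|\phi_k(\Lambda_D)\Lambda_D^{-2}\psi_j(\Lambda_D)^{1/2}\|_{L^p\to L^p}\le C2^{-2k}2^{-|j-k|}$.

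With the kernel you wrote, $2^{(s-2)n}\min(2^{-3n},2^{n})$ with $n=j-k$ is summable for $1<s<5$. That contradicts your own stated conclusion. Only the corrected kernel $2^{(s-2)(j-k)-|j-k|}$ gives exactly $|s-2|<1$. The upper limit is genuine: for an eigenfunction $e$ of $A_D$, $\|\psi_j(\Lambda_D)^{1/2}\Delta e\|_{L^2}$ decays only like $2^{-j}$ as $j\to\infty$, so the equivalence fails for $s>3$.

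Your factorization in the last paragraph is correct once the $2^{2j}$ is kept. It also makes explicit the uniformity of the $L^p$ bounds over the family of symbols indexed by $j-k$, which the paper leaves implicit when it invokes Lemma~\ref{lem:0128-1} and ``spectral localization''.

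Where you genuinely depart from the paper is the bound $\|f\|_{\dot B^s_{p,q}(A_D)}\lesssim\{\sum_j(2^{(s-2)j}\|\psi_j(\Lambda_D)^{1/2}\Delta f\|_{L^p})^q\}^{1/q}$. The paper argues at a single scale. Since $\psi_j(\lambda)$ is bounded below on $\operatorname{supp}\phi_j$, the operator $\phi_j(\Lambda_D)\psi_j(\Lambda_D)^{-1/2}$ is a dilated $C_c^\infty$ multiplier, so $\|\phi_j(\Lambda_D)\Delta f\|_{L^p}\le C\|\psi_j(\Lambda_D)^{1/2}\Delta f\|_{L^p}$. The lift property $\|f\|_{\dot B^s_{p,q}(A_D)}\simeq\|\Delta f\|_{\dot B^{s-2}_{p,q}(A_D)}$ then finishes, with no restriction on $s$.

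Your route through the telescoping identity $\sum_i\psi_i\equiv1$ and a second Young convolution is valid with the corrected kernel. However, it re-imposes $|s-2|<1$ in a direction where none is needed, and it requires the (easy) absolute convergence of $\sum_j\phi_k(\Lambda_D)\psi_j(\Lambda_D)$ in operator norm. You can keep only the diagonal term instead: $\phi_k(\Lambda_D)f=-\phi_k(\Lambda_D)\psi_k(\Lambda_D)^{-1/2}\Lambda_D^{-2}\,\psi_k(\Lambda_D)^{1/2}\Delta f$. Here $\phi_k(\lambda)\psi_k(\lambda)^{-1/2}\lambda^{-2}$ equals $2^{-2k}$ times a fixed $C_c^\infty$ function of $2^{-k}\lambda$. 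This gives the paper's inequality in one line and even absorbs the lift step.
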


\begin{pf}
	We define $\Phi_j := \phi_{j-1} + \phi_j + \phi_{j+1}$ and note that $\phi_j(\Lambda_D) = \Phi_j(\Lambda_D) \phi_j(\Lambda_D)$. 
	
	\noindent 
	(1) Let $f \in \dot{B}^{\alpha + 2(\frac{1}{r}-\frac{1}{p})}_{r,1}(A_D)$. We utilize the resolution of identity in $\mathcal{Z}'_D$ (see Lemma~4.5 in \cite{IMT-2019}):
	\[
	f = \sum_{j \in \mathbb{Z}} \phi_j(\Lambda_D) f.
	\]
	By applying Lemma~\ref{lem:0128-1} to the term $\nabla^\alpha \Phi_j(\Lambda_D)$, we obtain:
	\[
	\begin{split}
		\| \nabla^\alpha f \|_{L^p} 
		&\leq \sum_{j \in \mathbb{Z}} \| \nabla^\alpha \Phi_j(\Lambda_D) \phi_j(\Lambda_D) f \|_{L^p} \\
		&\leq C \sum_{j \in \mathbb{Z}} 2^{\alpha j + 2(\frac{1}{r}-\frac{1}{p})j} \| \phi_j(\Lambda_D) f \|_{L^r} \\
		&= C \| f \|_{\dot{B}^{\alpha + 2(\frac{1}{r}-\frac{1}{p})}_{r,1}(A_D)}.
	\end{split}
	\]

\noindent 
(2) Let $f \in \dot{B}^s_{p,q}(A_D)$. By the lift property for $\dot{B}^s_{p,q}(A_D)$, we have
\[
\| f \|_{\dot{B}^s_{p,q}(A_D)} \simeq \| A_D f \|_{\dot{B}^{s-2}_{p,q}(A_D)} = \| \Delta f \|_{\dot{B}^{s-2}_{p,q}(A_D)}. 
\]
We note that $\phi_j(\Lambda_D) \psi_j(\Lambda_D)^{-\frac{1}{2}}$ is an operator defined via dyadic scaling, associated with a function in $C_0^\infty(\mathbb R)$. Applying Lemma~\ref{lem:0128-1} to $\phi_j(\Lambda_D) \psi_j(\Lambda_D)^{-\frac{1}{2}}$, we obtain
\[
\begin{split}
	\| \phi_j(\Lambda_D) \Delta f \|_{L^p} 
	&= \| \phi_j(\Lambda_D) \psi_j(\Lambda_D)^{-\frac{1}{2}} \psi_j(\Lambda_D)^{\frac{1}{2}} \Delta f \|_{L^p} 
	\leq C \| \psi_j(\Lambda_D)^{\frac{1}{2}} \Delta f \|_{L^p},
\end{split}
\]
which implies
\[
\| f \|_{\dot{B}^s_{p,q}(A_D)} \leq C \left\{ \sum_{j \in \mathbb{Z}} \left( 2^{(s-2)j} \| \psi_j(\Lambda_D)^{\frac{1}{2}} \Delta f \|_{L^p} \right)^q \right\}^{\frac{1}{q}}.
\]

For the converse inequality, we write 
\[
\psi_j(\Lambda_D)^{\frac{1}{2}} \Delta f = \sum_{k \in \mathbb{Z}} A_D \psi_j(\Lambda_D)^{\frac{1}{2}} \Phi_k(\Lambda_D) \Big( \phi_k(\Lambda_D) f \Big). 
\]
Based on the following explicit relation:
\[
A_D \psi_j(\Lambda_D)^{\frac{1}{2}} = A_D \left\{ \frac{3}{4} \cdot 2^{-2j} A_D (1 + 2^{-2j-2} A_D)^{-1} (1 + 2^{-2j} A_D)^{-1} \right\}^{\frac{1}{2}},
\]
and the spectral localization property of $\Phi_k(\Lambda_D)$, we have
\[
\begin{split}
	2^{(s-2)j} \| \psi_j(\Lambda_D)^{\frac{1}{2}} \Delta f \|_{L^p} 
	&\leq C 2^{(s-2)j} \sum_{k \in \mathbb{Z}} 2^{2k} \cdot 
	  \Big\{ 2^{-2j+2k} (1 + 2^{-2j+2k})^{-2} \Big\}^{\frac{1}{2}} 
	   \| \phi_k(\Lambda_D) f \|_{L^p} \\
	&\leq C \sum_{k \in \mathbb{Z}} 2^{(s-2)(j-k)} \cdot 2^{-|j-k|} \cdot 2^{sk} \| \phi_k(\Lambda_D) f \|_{L^p} .
\end{split}
\] 
By taking the $\ell^q(\mathbb{Z})$ norm and applying Young's inequality for convolutions, we obtain
\[
\left\{ \sum_{j \in \mathbb{Z}} \left( 2^{(s-2)j} \| \psi_j(\Lambda_D)^{\frac{1}{2}} \Delta f \|_{L^p} \right)^q \right\}^{\frac{1}{q}} \leq C \left( \sum_{m \in \mathbb{Z}} 2^{(s-2)m} 2^{-|m|} \right) \| f \|_{\dot{B}^s_{p,q}(A_D)}.
\]
The series on the right-hand side converges provided that $|s-2| < 1$, which completes the proof.
\end{pf}

\begin{lem}\label{lem:0128-4} 
	Let $\psi_j(\Lambda_D)$ be defined by \eqref{0126-1}.  Then 
	\[
	\begin{split}
		&  \| \psi_j(\Lambda _D) \|_{L^2\to L^2} \leq \| \psi_j(\Lambda _D) ^{\frac{1}{2}}\|_{L^2 \to L^2},
\\
&	 \| \nabla \psi_j(\Lambda _D) \|_{L^2\to L^2} \leq C 2^j\| \psi_j(\Lambda _D) ^{\frac{1}{2}}\|_{L^2 \to L^2}, 
\\
&  \|  \psi_j(\Lambda _D)^{\frac{1}{2}}\nabla \|_{L^2\to L^2} \leq C 2^j , 
\\
& \| \nabla (1+ 2^{-2j} A _D)^{-1} \|_{L^2 \to L^2} 
\leq C \| \psi _j(\Lambda _D)^{\frac{1}{2}} \|_{L^2 \to L^2}  .
\end{split}
\]
\end{lem}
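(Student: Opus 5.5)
All four inequalities are comparisons of operators that are functions of $A_D$, possibly composed with one gradient. The plan is therefore to reduce each one to a pointwise inequality between scalar multipliers on the spectrum $\sigma(A_D)\subset[\lambda_1,\infty)$, with $\lambda_1>0$, and then apply the spectral theorem.

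\textbf{Reduction and notation.} The key identity is that, for a bounded Borel function $h$ with $h(A_D)f\in D(A_D)\subset H^1_0$,
\[
\|\nabla h(A_D)f\|_{L^2}^2=(A_D h(A_D)f,h(A_D)f)=\|\Lambda_D h(A_D) f\|_{L^2}^2 .
\]
This is obtained by integration by parts; the boundary term vanishes because $h(A_D)f\in H^1_0$. Hence $\|\nabla h(A_D)\|_{L^2\to L^2}=\sup_{\lambda\in\sigma(A_D)}\lambda^{1/2}|h(\lambda)|$. Writing $x=2^{-2j}\lambda$, the multiplier of $\psi_j$ is
\[
\psi_j(\lambda)=\frac{3}{4}\,\frac{x}{(1+x/4)(1+x)} .
\]
This is explicit, nonnegative and bounded by $1$.

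\textbf{First inequality.} Since $0\le\psi_j\le1$, we have $\psi_j\le\psi_j^{1/2}$ pointwise. Taking suprema over the spectrum gives the first inequality.

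\textbf{Second inequality.} We need $\lambda\psi_j(\lambda)^2\le C2^{2j}\psi_j(\lambda)$. This follows from
\[
x\,\psi_j=\tfrac34\,\frac{x^2}{(1+x/4)(1+x)}\le 3,
\]
which gives $\lambda\psi_j(\lambda)^2 = 2^{2j}\,x\psi_j\cdot\psi_j\le 3\cdot2^{2j}\psi_j(\lambda)$. Now take square roots and suprema.

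\textbf{Third inequality.} I would pass to the adjoint: $(\psi_j^{1/2}\nabla)^*=-\operatorname{div}\psi_j^{1/2}$ on vector fields, and $\|\psi_j^{1/2}\nabla\|=\|(\psi_j^{1/2}\nabla)^*\|$. To control the adjoint, I would first check $\|\nabla\psi_j^{1/2}\|_{L^2\to L^2}\le C2^j$ via the reduction, using the same bound $\lambda\psi_j(\lambda)=2^{2j}x\psi_j\le 3\cdot2^{2j}$. The remaining step is to bound $\|\operatorname{div}\psi_j^{1/2}\|$ by the same quantity, by duality against $\nabla$ on $H^1_0$: for a vector field $F$ and $g\in L^2$, write $(\psi_j^{1/2}\nabla g,F)$ and move $\psi_j^{1/2}$ using the self-adjoint calculus, testing against a dense class of $g\in H^1_0$.

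\textbf{Fourth inequality (main obstacle).} By the reduction, the left-hand side equals $\sup_\lambda \lambda^{1/2}(1+x)^{-1}$. Comparing multipliers gives
\[
\frac{\lambda(1+x)^{-2}}{\psi_j(\lambda)}=\tfrac43\,2^{2j}\frac{1+x/4}{1+x}\le \tfrac43\,2^{2j}.
\]
So the pointwise comparison of multipliers, run exactly as in the second inequality, produces the factor $2^j$. I expect this step to be the main obstacle. I would first recheck the endpoints: both sides scale like $2^j$ near $x\sim1$, while at low frequency $x\ll1$ the right-hand side carries $x^{1/2}$. To obtain the bound exactly as stated, the comparison would have to be taken in the sense in which $\psi_j^{1/2}$ is paired, in later use, with the extra $\Lambda_D$ coming from $\nabla$. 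That is, one uses the identity
\[
\nabla(1+2^{-2j}A_D)^{-1}=\nabla\Lambda_D^{-1}\cdot \Lambda_D(1+2^{-2j}A_D)^{-1},
\]
together with $L^2$-boundedness of the Riesz transform $\nabla\Lambda_D^{-1}$ (by the reduction identity it is an isometry). One then bounds the scalar factor $\lambda^{1/2}(1+x)^{-1}$ by a constant multiple of the spectral weight carried by $\psi_j^{1/2}$ at the matching scale. Verifying that this scalar comparison closes uniformly in $j$ is the step I would check most carefully.
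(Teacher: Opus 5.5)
Your treatment of the first three inequalities is sound and is essentially the paper's argument. It rests on the pointwise bounds $\psi_j\le\psi_j^{1/2}$ and $x\psi_j\le 3$ on the spectrum, plus duality for the third. Your multiplier comparisons in fact give vector-wise bounds such as $\|\nabla\psi_j(\Lambda_D)f\|_{L^2}\le C2^j\|\psi_j(\Lambda_D)^{1/2}f\|_{L^2}$, which is what Lemma~\ref{lem:1126-5} actually needs. Two small corrections:
\begin{itemize}
\item Use $\|\nabla u\|_{L^2}=\|\Lambda_D u\|_{L^2}$ on $D(\Lambda_D)=H^1_0$, not only on $D(A_D)$, since $\psi_j(\Lambda_D)^{1/2}$ maps $L^2$ only into the former.
\item In the third inequality the boundary term vanishes because $\psi_j(\Lambda_D)^{1/2}h\in H^1_0$ for every $h\in L^2$. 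Hence $|(\psi_j(\Lambda_D)^{1/2}\partial_{x_i}g,h)|=|(g,\partial_{x_i}\psi_j(\Lambda_D)^{1/2}h)|\le C2^j\|g\|_{L^2}\|h\|_{L^2}$ holds for all $g\in H^1(\Omega)$ with no boundary condition on $g$. This is how it is applied in \eqref{1126-2}, so restricting to $g\in H^1_0$ is unnecessary.
\end{itemize}

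The genuine gap is the fourth inequality, and no argument can close it, because as printed it is false. Let $e$ be an eigenfunction of $A_D$ with eigenvalue $\lambda$, and choose $j$ with $2^{2j}\le\lambda<2^{2j+2}$. Then $\|\nabla(1+2^{-2j}A_D)^{-1}e\|_{L^2}=\lambda^{1/2}(1+2^{-2j}\lambda)^{-1}\|e\|_{L^2}\ge\frac25\,2^j\|e\|_{L^2}$, whereas $\|\psi_j(\Lambda_D)^{1/2}\|_{L^2\to L^2}\le 1$. Letting $\lambda\to\infty$ along the spectrum excludes any uniform $C$.

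In particular your proposed rescue cannot work. Since $\nabla\Lambda_D^{-1}$ is an isometry (as you note), the factorization changes nothing. The scalar bound $\lambda^{1/2}(1+x)^{-1}\le C\psi_j(\lambda)^{1/2}$ fails at $x=1$, where the left side equals $2^{j-1}$ and the right side equals $C\sqrt{3/10}$. Your endpoint check that ``both sides scale like $2^j$ near $x\sim1$'' is mistaken; only the left side does.

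The statement has simply dropped a factor $2^j$. The paper's own proof, via $\|\nabla f\|_{L^2}\simeq\|\Lambda_D f\|_{L^2}$ and \eqref{0209-1}, yields $\|\nabla(1+2^{-2j}A_D)^{-1}\|_{L^2\to L^2}\le C2^j\|\psi_j(\Lambda_D)^{1/2}\|_{L^2\to L^2}$. That is also the form used in the estimate of $I_1$, where the prefactor $2^{-2j-2l}$ absorbs the $2^{2j}$ coming from the two gradient factors. The right conclusion is to flag the misprint and prove $\|\nabla(1+2^{-2j}A_D)^{-1}f\|_{L^2}\le\frac{2}{\sqrt3}\,2^j\|\psi_j(\Lambda_D)^{1/2}f\|_{L^2}$. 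Your comparison $\lambda(1+x)^{-2}\le\frac43\,2^{2j}\psi_j(\lambda)$ already gives exactly this.
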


\begin{pf}
	It is straightforward that the operator norm is estimated by
	\[
	\| \psi_j(\Lambda_D)^{\frac{1}{2}} \|_{L^2 \to L^2} \leq 1,
	\]
	which proves the first inequality. To establish the second inequality, we write
	\[
	\nabla \psi_j(\Lambda_D) = 2^j \cdot \frac{3}{4} \left( 2^{-2j} \nabla A_D^{\frac{1}{2}} (1 + 2^{-2j-2} A_D)^{-1} \right) \left( 2^{-j} A_D^{\frac{1}{2}} (1 + 2^{-2j} A_D)^{-1} \right).
	\]
	We then note the uniform boundedness of the involved operators:
	\[
\sup _{j \in \mathbb Z} 
\|  2^{-2j} \nabla  A_D^{\frac{1}{2}} (1+2^{-2j-2}A_D)^{-1}  \|_{L^2 \to L^2} 
< \infty ,
	\]
	and the norm equivalence:
	\begin{equation} \label{0209-1}
		\| 2^{-j} A_D^{\frac{1}{2}} (1 + 2^{-2j} A_D)^{-1} \|_{L^2 \to L^2} \simeq \| \psi_j(\Lambda_D)^{\frac{1}{2}} \|_{L^2 \to L^2}.
	\end{equation}
	This yields the second inequality.
	
	The third inequality is ensured by a duality argument combined with the second inequality. Finally, the fourth inequality follows from the equivalence $\| \nabla f \|_{L^2} \simeq \| A_D^{\frac{1}{2}} f \|_{L^2}$ for $f \in D(A_D) \subset L^2$ and the relation \eqref{0209-1}.
\end{pf}

\subsection{Independence from boundary conditions for low regularity}

\begin{lem}\label{lem:0127-1}
Suppose that 
\[
1 \leq p < \infty, s < \frac{1}{p} , 1 \leq q \leq \infty, \quad \text{ or } \quad 
1 < p < \infty ,s = \frac{1}{p}, 1 < q  < \infty . 
\]		
Then,  $C^\infty (\overline{\Omega})$ is included in $\dot B^s_{p,q}(A_D)$, where $\overline{\Omega}$ is the closure of $\Omega$. 
\end{lem}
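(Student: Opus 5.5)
We must show that every $f \in C^\infty(\overline\Omega)$ has finite $\dot B^s_{p,q}(A_D)$ norm. Since the spectrum of $A_D$ has a positive infimum, $\phi_j(\Lambda_D) f = 0$ for all sufficiently negative $j$, because $\phi_j$ vanishes on the spectrum there. So only the high-frequency part $j \ge 0$ needs attention.

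The plan is to split $f$ into a piece vanishing near $\partial\Omega$ and a boundary layer at the scale of each dyadic block. Fix $j \ge 0$, and let $\chi_j(x) = \chi(2^{2j} \mathrm{dist}(x,\partial\Omega))$. Here $\chi \in C^\infty(\mathbb R)$ satisfies $\chi = 1$ on $[0,1]$ and $\chi = 0$ on $[2,\infty)$. The natural length scale is $2^{-j}$ in terms of $\Lambda_D \sim 2^j$; note that with this paper's normalization $\phi_j(\Lambda_D)$ has $L^r\to L^p$ bounds $2^{2(1/r-1/p)j}$, so I will choose the layer width to match. Concretely, set $\delta_j = 2^{-j}$ and write
\[
f = \chi_{j} f + (1-\chi_{j}) f ,\qquad \chi_j := \chi(\delta_j^{-1}\mathrm{dist}(\cdot,\partial\Omega)).
\]

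For the boundary layer term I use Lemma~\ref{lem:0128-1} with $r=p$: $\|\phi_j(\Lambda_D)(\chi_j f)\|_{L^p} \le C \|\chi_j f\|_{L^p} \le C\|f\|_{L^\infty} \delta_j^{1/p} = C 2^{-j/p}$.

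For the interior term $g_j = (1-\chi_j) f$, the function lies in $D(A_D^m)$ for every $m$ because it vanishes near the boundary. I write $\phi_j(\Lambda_D) g_j = 2^{-2mj}\,\tilde\phi_j(\Lambda_D) A_D^m g_j$, where $\tilde\phi(\lambda) = \lambda^{-2m}\phi_0(\lambda)$ again gives uniform $L^p$ bounds by Lemma~\ref{lem:0128-1}. This yields $\|\phi_j(\Lambda_D) g_j\|_{L^p} \le C 2^{-2mj}\|\Delta^m g_j\|_{L^p}$. Each derivative falling on $\chi_j$ costs $\delta_j^{-1}$, and the support of those derivatives has measure $\sim\delta_j$. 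Hence $\|\Delta^m g_j\|_{L^p} \le C(1+\delta_j^{-2m+1/p})$. Taking $m=1$ already gives a bound $C 2^{-j/p}$, up to a harmless $2^{-2j}$ term. Thus $2^{sj}\|\phi_j(\Lambda_D) f\|_{L^p} \le C 2^{(s-1/p)j}$ for $j \ge 0$, which is summable in $\ell^q$ when $s<1/p$, for any $q$ including $q=\infty$.

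The endpoint $s = 1/p$ with $1<q<\infty$ is the main obstacle, since the above only gives a bounded sequence, i.e.\ membership in $\dot B^{1/p}_{p,\infty}$. For this case I would use real interpolation instead of summing dyadic bounds directly. The estimates above show $C^\infty(\overline\Omega) \subset \dot B^{s_0}_{p,\infty}(A_D)$ for some $s_0 < 1/p$. To get a space slightly above $1/p$, I would invoke the fact that the characteristic-type function of a smooth domain lies in $H^{s_1}_p$ for $s_1<1/p$ only. So I would instead argue by duality via Proposition~\ref{prop:0128-2}(2): for $1<q<\infty$, it suffices to show that $h \mapsto \int_\Omega f h\,dx$ is bounded on $\dot B^{-1/p}_{p',q'}(A_D)$, a space of functions with Dirichlet condition built in. I would control this pairing by a Hardy-type inequality, $\int |h|\,\mathrm{dist}(x,\partial\Omega)^{-1/p}$-weighted, which is the step I expect to require the most care. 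If that fails, a fallback is to refine the layer decomposition above using almost-orthogonality of the pieces $\phi_j(\Lambda_D)\nabla\chi_k$ across different $k$, to upgrade the $\ell^\infty$ bound to $\ell^q$ for $q>1$.
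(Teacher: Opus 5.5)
Your argument for $s<1/p$ is correct, and it takes a different route from the paper. You bound each dyadic block directly. You split $f$ into a boundary layer of width $2^{-j}$, which gives $C2^{-j/p}$ by the $L^p$-boundedness of $\phi_j(\Lambda_D)$, and an interior part in $C_c^\infty(\Omega)$, where $\phi_j(\Lambda_D)=2^{-2j}\widetilde\phi(2^{-j}\Lambda_D)A_D$ gives the same bound. Hence $\|\phi_j(\Lambda_D)f\|_{L^p}\le C2^{-j/p}$. This yields every $s<1/p$ and $1\le q\le\infty$, including $p=1$, and also $f\in\dot B^{1/p}_{p,\infty}(A_D)$. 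Only the width $\delta_j=2^{-j}$ works, so drop your first definition with $2^{2j}$. The paper instead proves the endpoint first and deduces $s<1/p$ from it by embedding.

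The gap is the endpoint $s=1/p$, $1<q<\infty$. You only sketch a duality/Hardy route and an almost-orthogonality fallback, and neither can succeed, because this part of the statement is false. Take $f\equiv1$ and $p=q=2$:
\begin{itemize}
\item Since $\sum_j2^j|\phi_0(2^{-j}\lambda)|^2\simeq\lambda$, one has $\|g\|_{\dot B^{1/2}_{2,2}(A_D)}\simeq\|\Lambda_D^{1/2}g\|_{L^2}$. So $\dot B^{1/2}_{2,2}(A_D)=D(A_D^{1/4})=[L^2,H^1_0(\Omega)]_{1/2}$, which is the Lions--Magenes space $H^{1/2}_{00}(\Omega)$.
\item Let $\rho=\mathrm{dist}(\cdot,\partial\Omega)$. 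Interpolate the identity map between $L^2\to L^2$ and $H^1_0(\Omega)\to L^2(\rho^{-2}dx)$, the latter by Hardy's inequality. This gives $\|\rho^{-1/2}g\|_{L^2}\le C\|\Lambda_D^{1/2}g\|_{L^2}$.
\item But $\int_\Omega\rho^{-1}\,dx=\infty$, so $1\notin\dot B^{1/2}_{2,q}(A_D)$ for $1<q\le 2$.
\end{itemize}
The inequality your duality step needs, $|\int_\Omega h\,dx|\le C\|h\|_{\dot B^{-1/2}_{2,2}(A_D)}$, is exactly what fails. On an interval, $1=\frac4\pi\sum_{n\ \mathrm{odd}}\frac{\sin nx}{n}$ gives $\|\phi_j(\Lambda_D)1\|_{L^2}\simeq 2^{-j/2}$ for all large $j$. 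So your $\ell^\infty$ bound is sharp, and no almost-orthogonality argument can improve it.

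The paper's own endpoint proof is defective as well. Step~1 claims $\sup_{\delta}\|\chi_\delta\|_{\dot B^{1/p}_{p,1}(A_D)}<\infty$, using the identity $\phi_j(\Lambda_D)\chi_\delta=\phi_j(\Lambda_D)(\chi_\delta\widetilde\chi_\delta)$ for the low frequencies. That identity is false, since $\chi_\delta-\chi_\delta\widetilde\chi_\delta=\chi_{4\delta}$. Moreover, since $\chi_\delta\to1$ in $L^p$, Fatou's lemma would turn the claimed bound into $1\in\dot B^{1/p}_{p,1}(A_D)$, which contradicts the counterexample above for $p=2$.

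So the true content of the lemma is what your dyadic estimate proves: membership for all $s<1/p$, together with $C^\infty(\overline\Omega)\subset\dot B^{1/p}_{p,\infty}(A_D)$. The lemma is invoked later only through Proposition~\ref{prop:1121-1}, with $p=2$ and $s=\gamma<1/2$, which your argument covers.
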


\noindent 
{\bf Remark. } 
The regularity condition $s \leq 1/p$ is motivated by the trace theorem and the denseness of $C_0^\infty(\Omega)$ in $\dot{B}^s_{p,q}(A_D)$ (cf. \cite[Section 2.9.4]{Trieb_1995}). This property can be verified in a similar manner as for Besov spaces defined via the restriction of functions from the whole space. For the critical case $s = 1/p$, the proof of the denseness requires the theorem of S. Mazur (see~\cite[Chapter~5, Section~1, Theorem~2]{Yosi_1980}).

\vskip2mm 

\begin{pf}  
Let $f \in C ^\infty (\overline{\Omega})$. 
It is sufficient to study the case where $s > 0$, since $\dot B^s_{p,q}(A_D) \hookrightarrow \dot B^{s_0}_{p,q}(A_D)$ for $s \leq 0$ due to the embedding property in Proposition~\ref{prop:0128-2}~(4). 

\vskip2mm 
	
\noindent 	Step 1. 
	We introduce a smooth cut-off function $\chi_\delta$ ($0 < \delta \leq 1$) with compact support, satisfying:
\[
\delta < \mathrm{dist}(\mathrm{supp} \, \chi_\delta, \partial \Omega) < 2\delta, \quad \chi_\delta(x) = 1 \text{ if } \mathrm{dist}(x, \partial \Omega) > 2\delta,
\]
and
\[
|\nabla^\alpha \chi_\delta| \leq C \delta^{-|\alpha|} \quad \text{for } |\alpha| \leq 2,
\]
where $C$ is a positive constant independent of $\delta$. The existence of such $\chi_\delta$ is ensured by the compactness of the boundary $\partial \Omega$. 

We now claim that 
\begin{equation}\label{0210-1}
\sup _{0 < \delta \leq 1} \| \chi_{\delta} \|_{\dot B^{\frac{1}{p}}_{p,1}(A_D)} < \infty .
\end{equation}
We choose $j_0$ such that $2^{j_0} \leq \delta ^{-1} \leq 2^{j_0+1}$. 
For the sum over $j \geq j_0$, we utilize the identity 
$\chi _{\delta} = A_D^{-1}A_D\chi _\delta $ and write 
\[
\begin{split}
\sum _{j \geq j_0} 2^{\frac{1}{p}j} \| \phi_j(\Lambda _D) \chi_\delta \|_{L^p}
=& 
\sum _{j \geq j_0} 2^{\frac{1}{p}j} \| A_D ^{-1}\phi_j(\Lambda _D) A_D\chi_\delta \|_{L^p}
\\
\leq 
& C \sum _{j \geq j_0} 2^{(\frac{1}{p}-2)j} \| \Delta \chi _\delta  \|_{L^p}
\\
\leq 
& C 2^{(\frac{1}{p}-2)j_0} \delta ^{-2 + \frac{1}{p}}.
\end{split}
\]
By the definition of $j_0$, we see that the above estimate is uniform with respect to $\delta$. Indeed, 
\[
C 2^{(\frac{1}{p}-2)j_0} \delta ^{-2 + \frac{1}{p}}
\leq C (2 \delta )^{-\frac{1}{p}+2} \delta ^{-2+\frac{1}{p}}
= C 2^{-\frac{1}{p}}. 
\]
For the sum over $j < j_0$, we introduce a smooth cut-off function $\widetilde{\chi}_\delta$ whose support is contained in a neighborhood of $\mathrm{supp} \, \Delta \chi_\delta$.
\[
\widetilde{\chi}_{\delta} := \chi_{\frac{\delta}{4}} - \chi_{4\delta}. 
\]
We note that 
\[
\Delta \chi _\delta = \Delta (\chi _\delta \widetilde{\chi}_\delta), 
\quad 
\phi_j(\Lambda _D)\chi_\delta 
= A_D^{-1} \phi_j(\Lambda_D) A_D \big( \chi_\delta \widetilde{\chi}_\delta \big)
= \phi_j(\Lambda_D) \big(\chi_\delta \widetilde{\chi}_\delta \big). 
\]
The sum over $j < j_0$ is estimated by: 
\[
\sum _{j < j_0} 2^{\frac{1}{p}j} \| \phi_j(\Lambda) \chi_\delta \|_{L^p}
= \sum _{j < j_0} 2^{\frac{1}{p}j} \| \phi_j(\Lambda) \big( \chi_\delta  \widetilde{\chi}_\delta \big) \|_{L^p}
\leq C 2^{\frac{1}{p}j_0} \| \chi_\delta \widetilde{\chi}_\delta \|_{L^p}
\leq C (\delta^{-1})^{\frac{1}{p}} \cdot \delta^{\frac{1}{p}}
= C,
\]
which is a uniform estimate with respect to $\delta\in (0,1]$. 
We obtain \eqref{0210-1}.

\vskip2mm 

\noindent 
Step 2. 
	It is clear that $\chi_\delta f $ and $ A_D(\chi_\delta f) \in L^p$ for $\delta > 0$. 
		We note that the topology of $L^p$ is weaker than that of $\dot{B}^{\frac{1}{p}}_{p,1}(A_D)$, and $\chi_\delta f$ converges to $f$ in $L^p$ as $\delta \to 0$. 
	
	We now claim the following uniform boundedness:
	\begin{equation} \label{0129-1}
		\sup_{0 < \delta \leq 1} \| \chi_\delta f \|_{\dot{B}^{\frac{1}{p}}_{p,q}(A_D)} < \infty.
	\end{equation}
	It is sufficient to show the case when $q=1$ due to the embedding property 
	$\dot B^{\frac{1}{p}}_{p,1}(A_D) \hookrightarrow \dot B^{\frac{1}{p}}_{p,q}(A_D)$. 
	Using the resolution of identity, we decompose $\chi_\delta f$ as follows:
\[
\chi_\delta f = \left( \sum_{k \leq 0} \phi_k(\Lambda_D) \chi_\delta \right) f + \left( \sum_{k > 0} \phi_k(\Lambda_D) \chi_\delta \right) f 
=: (S_0 \chi_\delta ) f + \big((1 - S_0  ) \chi_\delta \big)f.
\]

For the low spectral component of $\chi_\delta$, we utilize the embedding into higher regularity spaces to establish uniform boundedness. Specifically, we have
\[
\begin{split}
	& \| (S_0 \chi_\delta) f \|_{\dot{B}^{\frac{1}{p}}_{p,1}(A_D)}\\
	&\leq C \| A_D ((S_0 \chi_\delta) f) \|_{L^p} \\
	&\leq C \left( \| \Delta (S_0 \chi_\delta) \|_{L^\infty} + \| \nabla (S_0 \chi_\delta) \|_{L^\infty} + \| S_0 \chi_\delta \|_{L^\infty} \right) 
	\left( \| \Delta f \|_{L^p} + \| \nabla f \|_{L^p} + \| f \|_{L^p} \right) \\
	&\leq C \| \chi_\delta \|_{L^\infty} \\
	&\leq C,
\end{split}
\]
where the constant $C$ depends on $f$ but is independent of $\delta \in (0, 1]$. In the third inequality, we used the fact that the derivatives of $S_0 \chi_\delta$ are bounded in $L^\infty$ independently of $\delta$ due to the spectral localization of $S_0$.

For the high spectral component, we divide the sum into two cases: $k \geq j$ and $k < j$. 
In the case where $k \geq j$ and $k \geq 1$, we utilize the boundedness of $\phi_j(\Lambda_D)$ and sum over $j \leq k$ as follows:
\[
\begin{split}
	\sum _{j \in \mathbb Z} \sum _{k \geq \max \{ j, 1 \}  }2^{\frac{1}{p}j} 
	\Big\| \phi_j(\Lambda _D) \Big( \big( \phi_k(\Lambda_D) \chi _\delta \big) f\Big)   \Big\|_{L^p}
	\leq 
	&C  \sum _{k \in \mathbb Z} \sum _{j \leq k }2^{\frac{1}{p} j} 
	\Big\|  \big( \phi_k(\Lambda_D) \chi _\delta \big) f  \Big\|_{L^p}
	\\
	\leq 
	&C \| \chi _\delta  \|_{\dot B^{\frac{1}{p}}_{p,1}(A_D)} \| f \|_{L^\infty}. 
\end{split}
\]
We note that the family $\{ \chi_\delta \}_{0 < \delta \leq 1}$ is uniformly bounded in $\dot{B}^{\frac{1}{p}}_{p,1}(A_D)$ with respect to $\delta$ by Step 1. 
When $1 \leq k < j$, we write 
\[
\phi_j(\Lambda _D) \Big(  \big( \phi_k (\Lambda _D)  \chi_\delta \big)  f\Big)
=A_D^{-1} \phi_j(\Lambda _D) (-\Delta )\Big(  \big( \phi_k (\Lambda _D)  \chi_\delta \big)  f\Big),
\]
and 
\[
\begin{split}
	& 
	\Big\|  \phi_j(\Lambda _D) \Big(  \big( \phi_k (\Lambda _D)  \chi_\delta \big)  f\Big)  \Big\|_{L^p}
	\\
	\leq 
	&C 2^{-2j} (2^{2k} + 2^k + 1) \| \phi_k(\Lambda _D) \chi_\delta \| _{L^p}   
	( \| \Delta f \|_{L^\infty } + \| \nabla f \|_{L^\infty}  +   \|  f  \|_{L^\infty})
	\\
	\leq 
	& C 2 ^{-2j} \cdot 2^{2k}  \| \phi_k(\Lambda _D) \chi_\delta \| _{L^p} , \quad k \geq 1,
\end{split}
\]
where $C$ depends on $f$. 
It follows from the above inequality that 
\[
\sum _{j \in \mathbb Z} \sum _{1 \leq k < j} 2^{\frac{1}{p}j}
\Big\|  \phi_j(\Lambda _D) \Big(  \big( \phi_k (\Lambda _D)  \chi_\delta \big)  f\Big) \Big\|_{L^p}
\leq C \| \chi_{\delta} \|_{\dot B^{\frac{1}{p}}_{p,1}(A_D)}. 
\]
We also note the uniformity of $\| \chi_\delta \|_{\dot B^{\frac{1}{p}}_{p,q}(A_D)}$ with respect to $\delta$, and then obtain \eqref{0129-1}. 

\vskip2mm 

\noindent 
Step 3. We prove that $f \in \dot B^s_{p,q}(A_D)$. 

We first consider the case where $1 <p, q < \infty$ and $s=1/p$. 
Since $\dot B^{\frac{1}{p}}_{p,q}(A_D)$ is a reflexive Banach space, a subsequence $\{\chi_{\delta _k} f\}_{k \in \mathbb N}$, with $\delta _k \to 0$ as $k \to \infty$, exists such that it converges to $f$ weakly in $\dot B^{\frac{1}{p}}_{p,q}(A_D)$ as $N \to \infty$. 
We then conclude $f \in \dot B^{\frac{1}{p}}_{p,q}(A_D)$. 
%We also apply the theorem of S. Masur (see Yosida~\cite[Chapter 5, Section 1, Theorem 2]{Yosi_1980}) and find suitable convex linear combinations of $\{ \chi_{\delta_k} f \}_{k=1}^\infty$, which converges to strongly in $\dot B^{\frac{1}{p}}_{p,q}(A_D)$. 

For the case where $s < 1/p$, $1 < p < \infty$ and $1 \leq q \leq \infty$, the embedding $\dot B^{\frac{1}{p}}_{p,2}(A_D) \hookrightarrow \dot B^s_{p,q}(A_D)$ (see Proposition~\ref{prop:0128-2} (4)) and the previous case ensure that $f \in \dot B^s_{p,q}(A_D)$. 

For the case where $0< s < 1$ and $p = 1$, 
we choose $p_0$ such that $1 < p_0 < \infty$ and 
$s < 1/p_0$. We note the embedding 
$\dot B^s_{p_0,q}(A_D) \hookrightarrow \dot B^s_{1,q}(A_D)$, 
proved by the H\"older inequality and the boundedness of $\Omega$. 
We know $f $ belongs to $ \dot B^{s }_{p_0,q}(A_D)$ 
by the previous case, which proves that 
$
f \in \dot B^{s }_{p_0,q}(A_D) 
\subset  \dot B^s_{1,q}(A_D) $. 
\end{pf}

\vskip3mm

\subsection{Bilinear Estimates}

\begin{prop}\label{prop:1121-1}
Let $\alpha , \beta \in \mathbb N$, $0 < \gamma < 1/2 $, 
  $ 1 \leq p_1, p_2, p_3, p_4 \leq \infty$, and  $1/2 = 1 / p_1 + 1/p_2 = 1/p_3 + 1/p_4 $. 
Then a positive constatnt $C$ exists such that for every $f \in B^{\alpha+\gamma}_{p_1,1}(A_D) \cap \dot B^{\alpha}_{p_3,1}(A_D)$ 
and $g \in \dot B^{\beta}_{p_2,1}(A_D) \cap \dot B^{\beta+\gamma}_{p_4,1}(A_D)) $, 
the function $\big(\nabla ^\alpha  f\big)   \big( \nabla^\beta g \big) $ belongs to the domain of the operator $\Lambda _D^\gamma$ 
defined on $\dot B^0_{2,1}(A_D) $ and  
\[
	\|  \big(\nabla ^\alpha  f\big)   \big( \nabla^\beta g \big) 
	\|_{\dot B^\gamma_{2,1}(A_D)} 
	\leq C \Big( \| f \|_{B^{\alpha+\gamma}_{p_1,1}(A_D)} \| g \|_{\dot B^\beta_{p_2,1}(A_D)}  
	      + \| f \|_{\dot B^\alpha_{p_3,1}(A_D)} \| g \|_{\dot B^{\beta+\gamma}_{p_4,1}(A_D)}  \Big). 
\] 
\end{prop}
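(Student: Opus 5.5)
We decompose both factors with the resolution of identity,
\[
f=\sum_{k\in\mathbb Z}\phi_k(\Lambda_D)f,\qquad g=\sum_{l\in\mathbb Z}\phi_l(\Lambda_D)g ,
\]
and write $f_k:=\phi_k(\Lambda_D)f$, $g_l:=\phi_l(\Lambda_D)g$. The product then splits into two pieces:
\[
(\nabla^\alpha f)(\nabla^\beta g)=\sum_{k\ge l}(\nabla^\alpha f_k)(\nabla^\beta g_l)+\sum_{k<l}(\nabla^\alpha f_k)(\nabla^\beta g_l).
\]
In the first sum $f$ carries the higher frequency, and it should produce the term $\|f\|_{B^{\alpha+\gamma}_{p_1,1}}\|g\|_{\dot B^\beta_{p_2,1}}$. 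In the second sum $g$ carries the higher frequency, and it should produce the term with $f$ in $\dot B^{\alpha}_{p_3,1}$ and $g$ in $\dot B^{\beta+\gamma}_{p_4,1}$. The two sums are symmetric, so we describe only the first.

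For each $j$ we have to estimate $2^{\gamma j}\|\phi_j(\Lambda_D)[(\nabla^\alpha f_k)(\nabla^\beta g_l)]\|_{L^2}$. Fourier support arguments are not available here: a product of spectrally localized pieces is not spectrally localized for $A_D$, and it does not satisfy the Dirichlet condition. We therefore avoid the lift $A_D^{-1}A_D$ used in Lemma~\ref{lem:0127-1} whenever it would require differentiating the product, and use only two bounds.

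\textbf{Case $j\le k$.} We use the $L^2$-boundedness of $\phi_j(\Lambda_D)$ (Lemma~\ref{lem:0128-1}) and then H\"older together with Lemma~\ref{lem:0128-1} for the derivatives:
\[
\|\phi_j(\Lambda_D)[(\nabla^\alpha f_k)(\nabla^\beta g_l)]\|_{L^2}\le C\,2^{\alpha k}\|f_k\|_{L^{p_1}}\,2^{\beta l}\|g_l\|_{L^{p_2}} .
\]
Since $\gamma>0$, summing $2^{\gamma j}$ over $j\le k$ gives $C2^{\gamma k}$. Summing over $l\le k$ and then over $k$ gives the desired bound by $\|f\|_{B^{\alpha+\gamma}_{p_1,1}}\|g\|_{\dot B^\beta_{p_2,1}}$.

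\textbf{Case $j>k$.} Here $j$ is larger than both frequencies, so we need decay in $j-k$. Writing $\phi_j(\Lambda_D)=\phi_j(\Lambda_D)\Lambda_D^{-\sigma}\,\Lambda_D^{\sigma}$, we would like to use $\|\phi_j(\Lambda_D)\Lambda_D^{-\sigma}\|_{L^2\to L^2}\le C2^{-\sigma j}$ together with a bound on $\Lambda_D^{\sigma}h$ for the product $h=(\nabla^\alpha f_k)(\nabla^\beta g_l)$. We take $\sigma\in(\gamma,1/2)$. This is exactly where the hypothesis $\gamma<1/2$ enters: for $\sigma<1/2$ the domain of $\Lambda_D^{\sigma}$ in $L^2$ coincides with $H^\sigma(\Omega)$, with no boundary condition, which is the fact underlying Lemma~\ref{lem:0127-1}. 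Interpolating between $L^2$ and $H^1$ then gives
\[
\|\Lambda_D^\sigma h\|_{L^2}\le C\|h\|_{L^2}^{1-\sigma}\|h\|_{H^1}^{\sigma}\le C\,2^{\sigma k}\,2^{\alpha k}\|f_k\|_{L^{p_1}}\,2^{\beta l}\|g_l\|_{L^{p_2}} .
\]
The factor $2^{\sigma k}$ comes from one derivative falling on $f_k$ or $g_l$, with $l\le k$, estimated by Lemma~\ref{lem:0128-1} with $\alpha+1\le 2$. If $\alpha$ or $\beta$ equals $2$, the derivative count exceeds what Lemma~\ref{lem:0128-1} covers, and we would instead interpolate with fractional differences, using $\|h\|_{H^{\sigma}}$ directly. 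We then obtain
\[
2^{\gamma j}\|\phi_j(\Lambda_D)h\|_{L^2}\le C\,2^{(\gamma-\sigma)(j-k)}\,2^{(\alpha+\gamma)k}\|f_k\|_{L^{p_1}}\,2^{\beta l}\|g_l\|_{L^{p_2}},
\]
and the sum over $j>k$ converges because $\sigma>\gamma$.

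The remaining claim is that the product lies in the domain of $\Lambda_D^\gamma$ on $\dot B^0_{2,1}(A_D)$. This follows because the dyadic series converges absolutely in $\dot B^\gamma_{2,1}(A_D)$, and its limit agrees with the product, which converges in $L^2$.

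The main obstacle is the justification of $D(\Lambda_D^\sigma)=H^\sigma(\Omega)$ for $\sigma<1/2$ with an $L^2$ bound, so that no boundary trace is needed. If I cannot quote it, I will try to derive it from the interpolation identity $[L^2,H^1_0]_\sigma=[L^2,H^1]_\sigma$ for $\sigma<1/2$. As a fallback I will run the argument through Lemma~\ref{lem:0127-1} by duality: for $\sigma<1/2$ pair against $\phi_j(\Lambda_D)$ test functions and control the $\dot B^{\sigma}_{2,\infty}$-type norm of the product.
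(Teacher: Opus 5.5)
\textbf{Verdict.} Your proof is correct, and it follows a genuinely different route from the paper's.

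\textbf{The paper's route.} The paper splits relative to the output frequency into $k>j$, $l>j$ and $k,l\le j$. The first two pieces are treated essentially as your case $j\le k$. For $k,l\le j$, the paper multiplies the product by the cutoff $\chi_\delta$ of Lemma~\ref{lem:0127-1} to restore the Dirichlet condition. It then applies the full lift $\phi_j(\Lambda_D)\Lambda_D^\gamma=\Lambda_D^{\gamma-2}\phi_j(\Lambda_D)A_D$ with the Leibniz rule, and removes the cutoff by a Fatou/limiting argument. The terms in which derivatives fall on $\chi_\delta$ are controlled through Besov bounds for $\chi_\delta$.

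\textbf{Your route and what it buys.} You lift only by a fractional order $\sigma\in(\gamma,\frac{1}{2})$. You then use the classical identification $D(\Lambda_D^\sigma)=[L^2,H^1_0]_\sigma=[L^2,H^1]_\sigma=H^\sigma(\Omega)$ for $\sigma<\frac{1}{2}$ (Lions--Magenes, Seeley, Grisvard), followed by $L^2$--$H^1$ interpolation. This imports one external fact, which you identified correctly and which is standard. In return:
\begin{itemize}
\item you need only one extra derivative on $f_k,g_l$ instead of two;
\item there is no cutoff and no limit in $\delta$;
\item the threshold $\gamma<\frac{1}{2}$ appears transparently as the range where boundary values are invisible to $\Lambda_D^\sigma$.
\end{itemize}
It also sidesteps the most delicate step of the paper's argument. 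The bound $\|\chi_\delta\|_{\dot B^\gamma_{2,1}(A_D)}\le C\delta^{\frac{1}{2}-\gamma}\to 0$ cannot hold as written. Indeed $\chi_\delta\to 1$ in $L^2$, while $\|\cdot\|_{L^2}\le C\|\cdot\|_{\dot B^\gamma_{2,1}(A_D)}$ by the spectral gap. So the boundary-layer terms there need more care than the paper gives them.

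\textbf{Two remarks on your write-up.}
\begin{enumerate}
\item The step $\|h\|_{H^1}\le C2^k(\cdots)$ uses $1\le C2^k$. This holds because $f_k=0$ for $k$ below a fixed $k_0$, again by the spectral gap; state it.
\item For $\alpha=2$ or $\beta=2$ you need a third-derivative bound, and this is the case actually used for $I_3$ in Lemma~\ref{lem:1126-5}. Your ``fractional differences'' fallback does not remove that need. Estimating $\|\nabla^2 f_k\|_{W^{\sigma,p_1}}$ by $2^{(2+\sigma)k}\|f_k\|_{L^{p_1}}$ is itself already beyond Lemma~\ref{lem:0128-1}. Simply state and use the higher-order version $\|\nabla^3\phi_k(\Lambda_D)\|_{L^r\to L^p}\le C2^{3k+2(\frac{1}{r}-\frac{1}{p})k}$. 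The paper's own proof tacitly needs even $\nabla^{\alpha+2}$, so this is no extra burden relative to it.
\end{enumerate}
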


\noindent {\bf Remark. } 
The higher order derivatives break the Dirichlet boundary condition of functions in general, 
and this implies that $\nabla ^\alpha f  \nabla ^\beta g$ does not satisfy that condition 
even if $f, g \in \dot B^s_{p,1}(A_D)$ for all $s \geq 0$. On the other hand, it is possible to apply $\Lambda_D^\gamma$ 
for small $\gamma$ due to Lemma~\ref{lem:0127-1}. 
The case $\gamma = 1/2$ can be regarded as the threshold regularity for the validity of the bilinear estimates, particularly in the absence of the Dirichlet boundary condition. For further details, we refer to the monograph by Triebel \cite{Trieb_1995} regarding the trace operator and to \cite{Iw-2023} for the specific bilinear estimates.

\vskip2mm 

\begin{pf}We introduce the notations $f_k, g_l$, where each frequency is restricted to around a dyadic number, 
	and write 
	\[ f = \sum _{k \in \mathbb Z}  \phi_k(\Lambda _D) f= \sum _{k \in \mathbb Z} f_k, \quad 
	  g = \sum _{l \in \mathbb Z} \phi_l(\Lambda _D) f = \sum _{l \in \mathbb Z} g_l .
	\]
We know $\nabla ^\alpha f_k, \nabla ^\beta g_l$ bellong to $C^\infty (\overline{\Omega})$, 
since $f_k, g_l$ belong to the domain of $A_D ^M$, 
the operator on $L^2$, for all $M \in \mathbb N$. 
Lemma~\ref{lem:0127-1} ensures that	$\Lambda _D^\gamma  \big(  \phi_k(\Lambda _D) f \phi_l (\Lambda _D) g \big)$ makes sense.

For each $j$ we write 
\[
\begin{split}
\phi_j(\Lambda _D)
\Lambda _D^\gamma (\nabla ^\alpha f \nabla ^\beta g) 
= 
&
\Big( \sum _{k>j, l \in \mathbb Z} + \sum_{k \in \mathbb Z, l>j}  + \sum _{k\leq j, l \leq j}
\Big)
 \phi_j(\Lambda _D)\Lambda _D^\gamma (\nabla ^\alpha f_k  \nabla^\beta g_l)
\\
=& I_j + II_j + III_j .
\end{split}
\]
For the term $I_j$, it follows from $j < k$ that 
\[
\sum _{ j \in \mathbb Z}\| I_j \|_{L^2} 
\leq C \sum _{ j \in \mathbb Z}
  2^{\gamma j } 
  \sum _{k > j} \| \nabla ^\alpha f_k \|_{L^{p_1}}\| \nabla ^\beta g_l \|_{L^{p_2}} 
\leq C \| f \|_{\dot B^{\gamma + \alpha}_{p_1,1}(A_D)}\| g \|_{\dot B^\beta_{p_2 ,1}(A_D)}.
\]
Similarly, 
\[
\sum _{j \in \mathbb Z} \| II_j \|_{L^2} 
\leq C 
  \| f \|_{\dot B^\alpha_{p_3,1}(A_D)} \| g \|_{\dot B^{ \beta +\gamma}_{p_4,1}(A_D)} . 
\]

For the third term $III_j$, we note that 
$\displaystyle \sum _{j \in \mathbb Z} \sum _{k,l \leq j} 
= \sum _{k,l \in \mathbb Z} \sum _{j \geq \max\{k,l\}}$ 
and study the sum over $j$ for each $k,l$. 
We utilize the smooth function $\chi _\delta \, (0 < \delta \leq 1)$ introduced in the proof of Lemma~\ref{lem:0127-1}, and know that the approximation with compact support functions is available in $\dot B^\gamma_{2,1}(A_D)$. 
We see from the Fatou Lemma that for every $k$ and $l$
\[
\sum _{j \geq \max \{ k,l \}}\| \phi_j(\Lambda _D) \Lambda _D ^\gamma (\nabla^\alpha  f_k \nabla ^\beta g_l) \|_{L^2} 
\leq  \liminf_{\delta \to 0} \sum _{j\geq \max\{ k,l\}} \| \phi_j(\Lambda _D) \Lambda _D^\gamma (\chi _\delta \nabla^\alpha  f_k \nabla ^\beta g_l) \|_{L^2} .
\]
We also note the following convergence: 
\[
\| \chi_\delta  \|_{\dot B^\gamma_{2,1}(A_D)} \leq C \delta ^{-\gamma + \frac{1}{2}},
\]
which is obtained by the same argument as in the proof Step 1 of Lemma~\ref{lem:0127-1} with the regularity $\gamma$ instead of $1/p$. 

We decompose $\chi_\delta$ with high and low spectral components. 
\[
\chi _\delta = \sum _{i \leq j} \phi_i(\Lambda _D) \chi_\delta 
+ \sum _{i > j} \phi_i (\Lambda _D) \chi _\delta
=: S_j  \chi _\delta + (1-S_j) \chi _{\delta }. 
\]
For the high spectral component, we simly estimate 
\[
\begin{split}
&	\sum _{j \geq \max\{ k,l\}} \Big\| \phi_j(\Lambda _D) \Lambda _D^\gamma  
  \Big(  ( (1-S_j)\chi _\delta ) \nabla^\alpha  f_k \nabla ^\beta g_l \Big) \Big\|_{L^2}
\\
\leq 
& 
C \sum _{j \in \mathbb Z}2^{\gamma j} 
    \sum _{i > j} \| \phi_i (\Lambda _D) \chi _\delta  \|_{L^2}
     \| \nabla ^\alpha f_k \|_{L^\infty} \| \nabla ^\beta g_l \|_{L^\infty}
\\
\leq 
& 
C \| \chi _\delta  \|_{\dot B^{\gamma}_{2,1}(A_D)}
\| \nabla ^\alpha f_k \|_{L^\infty} \| \nabla ^\beta g_l \|_{L^\infty}
\\
\leq 
& 
C \delta ^{-\gamma + \frac{1}{2}}
\| \nabla ^\alpha f_k \|_{L^\infty} \| \nabla ^\beta g_l \|_{L^\infty} 
\\
\to 
& 0 \quad \text{as } \delta \to 0, \quad \text{ for each } k, l. 
\end{split}
\]
We turn to consider the low spectral part $S_j \chi _\delta $. 
Since $(S_j \chi _\delta) \nabla ^\alpha f_k \nabla ^\beta g_l$ belongs to $C^\infty (\Omega)$, vanishes on the boundary, and $A_D=-\Delta $ is able to act on the term, we write 
\[
\phi_j(\Lambda _D) \Lambda _D ^\gamma \Big( (S_j \chi _\delta) \nabla^\alpha  f_k \nabla ^\beta g_l\Big) 
= \Lambda _D^{-2+\gamma} \phi_j(\Lambda _D) (-\Delta) \Big( (S_j \chi _\delta)  \nabla^\alpha  f_k \nabla ^\beta g_l \Big) . 
\]
and 
\begin{equation}\label{0304-1}
\begin{split}
	& 
	\sum _{j \geq \max\{k,l\}}\| \phi_j(\Lambda _D) \Lambda _D ^\gamma ( (S_j \chi_\delta )\nabla^\alpha  f_k \nabla ^\beta g_l) \|_{L^2} 
\\
\leq 
& C  \sum _{j \geq \max\{k,l\}}2^{(-2 + \gamma )j} 
\| \phi_j(\Lambda _D) (-\Delta )((S_j \chi _\delta ) \nabla^\alpha  f_k \nabla ^\beta g_l) \|_{L^2} .
\end{split}
\end{equation}
We apply the Leibniz rule to $-\Delta$ in the right-hand-side of \eqref{0304-1} 
and the term with $-\Delta$ acting on the product $\nabla ^\alpha f_k \nabla ^\beta g_l$ 
is estimated as follows. 
\[
\begin{split}
&\sum_{k,l \in \mathbb Z}	\liminf_{\delta \to 0} \sum _{j \geq \max\{k,l\}}2^{(-2 + \gamma )j} 
\| \phi_j(\Lambda _D) (S_j \chi _\delta )  (-\Delta )(\nabla^\alpha  f_k \nabla ^\beta g_l) \|_{L^2}
\\
\leq 
&\sum_{k,l \in \mathbb Z}  \sum_{j \geq \max\{k,l\}} 2^{(-2+\gamma)j} (2^{2k} + 2^{2l}) \cdot 2^{\alpha k} \| f_k  \|_{L^{p_1}} \cdot 2^{\beta l}\|  g_l \|_{L^{p_2}}
\\
\leq 
& C \| f \|_{\dot B^{\alpha + \gamma}_{p_1,1}(A_D)} \| g \|_{\dot B^{\beta}_{p_2,1}(A_D)} . 
\end{split}
\]
Except for the above term, we have 
\[
\begin{split}
& 	 \sum _{j \geq \max\{k,l\}}2^{(-2 + \gamma )j} 
\Big\| \phi_j(\Lambda _D) 
   \Big((-\Delta )S_j \chi _\delta )  \nabla^\alpha  f_k \nabla ^\beta g_l   
        -2 \nabla S_j \chi_\delta \cdot \nabla \big(\nabla^\alpha  f_k \nabla ^\beta g_l   \big)
   \Big)
\Big \|_{L^2}
\\
\leq 
&C \sum_{j \in \mathbb Z} 2^{(-2+\gamma)j} \sum _{i \leq j} (2^{2i} + 2^i) \| \phi_i(\Lambda _D) \chi_\delta  \|_{L^2}
     \Big(  \| \nabla^\alpha  f_k \nabla ^\beta g_l  \|_{L^\infty}
         +     \| \nabla \big(\nabla^\alpha  f_k \nabla ^\beta g_l   \big) \|_{L^\infty}
     \Big)
\\
\leq 
&C (\| \chi_\delta \|_{\dot B^{\gamma}_{2,1}(A_D)}  + \| \chi_\delta \|_{\dot B^{-1+\gamma}_{2,1}(A_D)}) 
\\
\leq 
&C \| \chi_\delta \|_{\dot B^{\gamma}_{2,1}(A_D)}  
\to 0 \quad \text{as } \delta \to 0, 
\quad \text{ for each } k,l, 
\end{split}
\]
where $C$ in the last line depends on $f_k, g_l$, 
and we have applied  the embedding $\dot B^\gamma_{2,1}(A_D) \hookrightarrow \dot B^{-1+\gamma}_{2,1}(A_D)$ due to the positivity of the spectrum and the estimate $\| \chi_\delta \|_{\dot B^\gamma_{2,1}(A_D)} \leq C \delta ^{-\gamma + \frac{1}{2}}$. 
We then conclude that 
\[
\begin{split}
	\sum _{ j \in \mathbb Z}\| III_j \|_{L^2} 
\leq & \sum _{j \in \mathbb Z} \lim _{\delta \to 0}
\sum _{j\geq \max\{ k,l\}} \| \phi_j(\Lambda _D) \Lambda _D^\gamma (\chi _\delta \nabla^\alpha  f_k \nabla ^\beta g_l) \|_{L^2}
\\
\leq 
& C \| f \|_{\dot B^{\gamma + \alpha}_{p_1,1}(A_D)}\| g \|_{\dot B^\beta_{p_2 ,1}(A_D)}.
\end{split}
\]
We complete the proof. 
\end{pf}

\section{Proof of Theorem \ref{thm:1}}

Instead of the original nonlinear term, we first consider the regularized term $N_\mu$ defined by 
\begin{equation}\label{1126-3}
	N_\mu (\theta, \widetilde \theta) := (1+\mu A_D)^{-1} \left\{ \left(\nabla^\perp \Lambda_D^{-1} \theta\right) \cdot \nabla (1+ \mu A_D)^{-1}\widetilde \theta \right\}, \quad \mu > 0.
\end{equation}
Lemma~\ref{lem:1126-5} establishes the necessary inequalities to handle this nonlinear term. In Proposition~\ref{prop:1010-5}, we construct a solution to the modified equation:
\[
\partial_t \theta + N_\mu (\theta, \theta) = 0.
\]
In particular, for initial data in $\dot{B}^2_{2,1}(A_D)$, we find an existence time $T > 0$, independent of $\mu > 0$, for the solution in $C([0,T],  \dot{B}^2_{2,1}(A_D))$. Furthermore, we establish the uniform boundedness of the solution in a function space of the Chemin--Lerner type. Finally, by considering the limit of $\theta_\mu$ as $\mu \to 0$, we construct a solution to the governing equation $\partial_t \theta + N_0(\theta, \theta) = 0$, satisfying the prescribed regularity and uniqueness properties. We note that the uniform estimates are also instrumental in proving the continuity of the solution with respect to time.

\begin{lem}\label{lem:1126-5}
	Let $N_\mu$ be defined by \eqref{1126-3}. 
Suppose that $\theta \in \dot B^2_{2,1}(A_D)$ and $\theta \not \equiv 0$. 
Then 
\begin{equation}\label{1118-1}
\sum _{j \in \mathbb Z} 
\Big| \int _{\Omega} \Big( \Delta  N_\mu (S_j\theta , \theta) \Big) \psi _j (\Lambda _D) \Delta \theta ~dx \Big| 
\cdot \dfrac{1}{ \| \psi_j (\Lambda _D)^{\frac{1}{2}}  \Delta \theta \|_{L^2}}
\leq C \| \theta  \|_{\dot B^2_{2,1}(A_D)} ^2, 
\end{equation}
\begin{equation}\label{1118-2}
\sum _{j \in \mathbb Z} 
\Big|  \int _{\Omega} \Big( \Delta  N_{\mu } ((1-S_j)\theta , \theta) \Big) \psi _j (\Lambda _D) \Delta \theta ~dx 
\Big| 
\cdot \dfrac{1}{\| \psi_j(\Lambda _D)\Delta \theta \|_{L^2}} 
\leq C \| \theta  \|_{\dot B^2_{2,1}(A_D) } ^2 ,
\end{equation}
where $C$ is a positive constant independent of $\mu$ and $\theta$. 
\end{lem}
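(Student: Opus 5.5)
The plan is to treat the two sums separately. In each, the factor $\psi_j(\Lambda_D)\Delta\theta$ is absorbed by Cauchy--Schwarz, so that the left-hand sides reduce to sums of $L^2$ norms of localized pieces of $\Delta N_\mu$. We write $B := (1+\mu A_D)^{-1}$, which is a contraction on $L^2$ and commutes with $\psi_j(\Lambda_D)$, $S_j$ and $A_D$. We also note that $\Delta N_\mu(\cdot,\theta) = -A_D B\{\cdots\}$ is legitimate once the bracket lies in $D(A_D)$; for smooth spectrally truncated data this follows because the product $u\cdot\nabla B\theta$ vanishes on $\partial\Omega$ ($B\theta|_{\partial\Omega}=0$ forces $\nabla B\theta$ to be normal there, and $u=\nabla^\perp\Lambda_D^{-1}\theta$ is tangential). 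We reduce to such data by density and conclude by Fatou, as in the proof of Proposition~\ref{prop:1121-1}.

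\textbf{Estimate \eqref{1118-1} (low-frequency transport part).} We move $\psi_j^{1/2}$ across the integral:
\[
\Big|\int \Delta N_\mu(S_j\theta,\theta)\,\psi_j\Delta\theta\Big| \le \|\psi_j^{1/2} A_D B\{(u_{\le j}\cdot\nabla)B\theta\}\|_{L^2}\,\|\psi_j^{1/2}\Delta\theta\|_{L^2},
\]
where $u_{\le j}=\nabla^\perp\Lambda_D^{-1}S_j\theta$. The task is then to show
\[
\sum_j\|\psi_j^{1/2} A_D\{(u_{\le j}\cdot\nabla)B\theta\}\|_{L^2}\le C\|\theta\|_{\dot B^2_{2,1}}^2 .
\]
We expand $-\Delta(u_{\le j}\cdot\nabla B\theta)$ by the Leibniz rule. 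The dangerous term $u_{\le j}\cdot\nabla\Delta B\theta$ contains three derivatives on $\theta$, so it must be handled by an integration by parts that exploits $\operatorname{div}u=0$ and $u\cdot n=0$: the symmetric part $\int (u\cdot\nabla F)\,\psi_j F$ is rewritten via a commutator $[\psi_j^{1/2},u_{\le j}\cdot\nabla]$. Here the resolvent form \eqref{0126-1} is used: $\psi_j$ is a difference of resolvents, so the commutator with a multiplication operator can be computed explicitly through $[(1+\lambda A_D)^{-1},a]=\lambda(1+\lambda A_D)^{-1}[\Delta,a](1+\lambda A_D)^{-1}$ with $\lambda\sim 2^{-2j}$. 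Lemma~\ref{lem:0128-4} then gives a gain of $2^{-j}$ times $\|\nabla u_{\le j}\|_{L^\infty}$ and $\|\nabla^2u_{\le j}\|_{L^\infty}\lesssim 2^{j}\|\theta\|_{\dot B^2_{2,1}}$ (Lemma~\ref{lem:0128-3}(1)). The remaining Leibniz terms have at most two derivatives on $B\theta$ and are bounded by $\|\nabla u\|_{L^\infty}\|\nabla^2\theta\|$ localized, or by Proposition~\ref{prop:1121-1} with small $\gamma$. After summation in $j$, with $\ell^1$ summability coming from $\|\psi_j^{1/2}\Delta\theta\|$ and Lemma~\ref{lem:0128-3}(2), these give $\|\theta\|_{\dot B^2_{2,1}}^2$.

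\textbf{Estimate \eqref{1118-2} (high-frequency velocity part).} Here $u_{>j}=\nabla^\perp\Lambda_D^{-1}(1-S_j)\theta$ carries frequency at least $2^j$, and the normalization is by $\|\psi_j\Delta\theta\|_{L^2}$. We bound
\[
\Big|\int \Delta N_\mu(\cdot)\,\psi_j\Delta\theta\Big|\le \|\nabla B\{u_{>j}\cdot\nabla B\theta\}\|_{L^2}\,\|\nabla\psi_j\Delta\theta\|_{L^2},
\]
integrating one derivative by parts. The boundary term vanishes since $N_\mu\in D(A_D)$. Lemma~\ref{lem:0128-4} converts $\|\nabla\psi_j\Delta\theta\|$ into $C2^j\|\psi_j\Delta\theta\|$-type quantities; if necessary we pass through $\psi_j^{1/2}$ and the first inequality of that lemma. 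For the remaining factor, we estimate
\[
\|\nabla(u_{>j}\cdot\nabla B\theta)\|_{L^2}\le \sum_{k>j}\big(\|\nabla u_k\|_{L^2}\|\nabla\theta\|_{L^\infty}+\|u_k\|_{L^\infty}\|\nabla^2\theta\|_{L^2}\big),
\]
and then sum $2^{j}\cdot 2^{k}\|\phi_k\theta\|_{L^2}$-type terms over $k>j$. This needs the extra $2^{-j}$: we place the $\nabla$ on $\psi_j$ and use $\|u_k\|_{L^\infty}\lesssim 2^{-k}\cdot2^{2k}\|\phi_k\theta\|_{L^2}$ paired with the factor $2^{j-k}$ to get a geometric kernel, and close via Young's inequality in $\ell^1$.

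\textbf{Main obstacle.} I expect the key difficulty to be the commutator step in \eqref{1118-1}. It requires both computing $[\psi_j^{1/2},u\cdot\nabla]$ with the resolvent calculus and verifying that all integrations by parts produce no boundary terms, even though derivatives of $\theta$ do not satisfy the Dirichlet condition. The low-regularity product estimate, Proposition~\ref{prop:1121-1} with $0<\gamma<1/2$, will be used precisely for products such as $\nabla u\,\nabla^2\theta$ that fail the boundary condition.
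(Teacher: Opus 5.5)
Your plan has the right ingredients: Leibniz splitting, a transport cancellation for the top-order term, Proposition~\ref{prop:1121-1} with $0<\gamma<1/2$ for lower-order products that violate the boundary condition, and $\|\psi_j(\Lambda_D)^{1/2}\nabla\|_{L^2\to L^2}\le C2^j$ for high-frequency pieces. However, the decisive step for \eqref{1118-1} does not work as you set it up, for two reasons.

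\emph{The stated ``task'' is false.} After Cauchy--Schwarz, the top-order piece $\psi_j(\Lambda_D)^{1/2}\big((u_{\le j}\cdot\nabla)\Delta B\theta\big)$ has lost a derivative. A component of $\theta$ at frequency $2^K$ (with $u_{\le j}$ of unit size) contributes roughly $\min(2^{j-K},2^{K-j})\,2^{3K}\|\phi_K(\Lambda_D)\theta\|_{L^2}$ to the $j$-th term, and summing over $j$ gives a $\dot B^3_{2,1}$ quantity. The cancellation must be used inside the integral, with the same function on both sides. You therefore have to move the outer $B=(1+\mu A_D)^{-1}$ across by self-adjointness (it commutes with $\psi_j(\Lambda_D)$ and $A_D$), so that $(u_{\le j}\cdot\nabla)\Delta\theta_\mu$ is tested against $\psi_j(\Lambda_D)\Delta\theta_\mu$, where $\theta_\mu=B\theta$. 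This is exactly why $N_\mu$ carries the resolvent on both sides. Discarding $B$ by its contraction property, as your reduction does, destroys the symmetry.

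\emph{The commutator is taken with the wrong operator.} $[\psi_j(\Lambda_D)^{1/2},u_{\le j}\cdot\nabla]$ cannot be computed from $[(1+\lambda A_D)^{-1},a]=\lambda(1+\lambda A_D)^{-1}[\Delta,a](1+\lambda A_D)^{-1}$, because $\psi_j^{1/2}$ is not a combination of resolvents. You give no substitute, and on a bounded domain there is no symbol calculus to fall back on. The paper keeps $\psi_j$ itself. For $\lambda\in\{2^{-2j-2},2^{-2j}\}$, set $\Theta=(1+\lambda A_D)^{-1}\Delta\theta_\mu\in D(A_D)$ and substitute $\Delta\theta_\mu=\Theta-\lambda\Delta\Theta$. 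The $\Theta$-part vanishes since $\operatorname{div}u=0$ and $u\cdot n=0$. Because $\Theta|_{\partial\Omega}=0$ forces $u\cdot\nabla\Theta=0$ on $\partial\Omega$, integration by parts gives
\[
\lambda\int_\Omega (u_{\le j}\cdot\nabla\Delta\Theta)\,\Theta\,dx=\lambda\sum_{i=1}^2\int_\Omega \partial_{x_i}\Theta\,\big(\partial_{x_i}u_{\le j}\cdot\nabla\Theta\big)\,dx .
\]
This is at most $C\|\theta\|_{\dot B^2_{2,1}(A_D)}\,\lambda\|\nabla\Theta\|_{L^2}^2\le C\|\theta\|_{\dot B^2_{2,1}(A_D)}\|\psi_j(\Lambda_D)^{1/2}\Delta\theta_\mu\|_{L^2}^2$ by Lemma~\ref{lem:0128-4}. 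Equivalently, your commutator idea goes through if you apply it to $\psi_j$ rather than $\psi_j^{1/2}$, via $2\int_\Omega(u\cdot\nabla F)\,\psi_j(\Lambda_D)F\,dx=\int_\Omega F\,[\psi_j(\Lambda_D),u\cdot\nabla]F\,dx$.

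For \eqref{1118-2}, your argument is the paper's Step~2: one derivative onto the test function, H\"older, and $\sum_j 2^j\sum_{k>j}2^k\|\phi_k(\Lambda_D)\theta\|_{L^2}\le C\|\theta\|_{\dot B^2_{2,1}(A_D)}$. The normalization step is wrong, though. Lemma~\ref{lem:0128-4} gives $\|\nabla\psi_j(\Lambda_D)\Delta\theta\|_{L^2}\le C2^j\|\psi_j(\Lambda_D)^{1/2}\Delta\theta\|_{L^2}$, not $C2^j\|\psi_j(\Lambda_D)\Delta\theta\|_{L^2}$. Its first inequality, $\|\psi_j(\Lambda_D)f\|_{L^2}\le\|\psi_j(\Lambda_D)^{1/2}f\|_{L^2}$, points the wrong way for your purpose. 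What you actually prove is \eqref{1118-2} with $\|\psi_j(\Lambda_D)^{1/2}\Delta\theta\|_{L^2}$ in the denominator. That is what the paper proves (its reduction to \eqref{1126-1}) and what Proposition~\ref{prop:1010-5} uses.

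The denominator $\|\psi_j(\Lambda_D)\Delta\theta\|_{L^2}$ in the statement is a misprint and cannot hold. As $j\to-\infty$ one has $S_j\theta=0$ and $\psi_j(\Lambda_D)=3\cdot 2^{2j}A_D^{-1}+O(2^{4j})$. So the $j$-th summand tends to $\big|\int_\Omega(u\cdot\nabla\theta_\mu)\,\Delta\theta_\mu\,dx\big|/\|\theta\|_{L^2}$ with $u=\nabla^\perp\Lambda_D^{-1}\theta$. This limit is generically nonzero, so the series diverges.
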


\begin{pf}
	Step 1. 
We prove the first inequality \eqref{1118-1}. Let us introduce the notation
\[
\theta_\mu := (1 + \mu A_D)^{-1} \theta.
\]
We can write
\[
\Delta_D N_\mu(S_j \theta, \theta) = \Delta_D (1 + \mu A_D)^{-1} N_0(S_j \theta, \theta_\mu) = (1 + \mu A_D)^{-1} \Delta_D N_0(S_j \theta, \theta_\mu),
\]
since $N_0(S_j \theta, \theta_\mu)$ belongs to the domain of the Dirichlet Laplacian $D(A_D) \subset L^2$. By further utilizing the self-adjointness (symmetric property) of the resolvent $(1 + \mu A_D)^{-1}$ on $L^2$, we find that the inequality \eqref{1118-1} is equivalent to
\begin{equation}\label{1120-1}
\sum _{j \in \mathbb Z} 
\Big| \int _{\Omega} \Big( \Delta _D N_0 \big(  S_j \theta,  \theta _\mu \big) \Big)
\psi _j (\Lambda _D)\Delta  \theta _\mu ~dx 
\Big| \cdot \dfrac{1}{\| \psi_j (\Lambda _D)^{\frac{1}{2}}  \Delta \theta \|_{L^2}}
 \leq C \| \theta \|_{\dot{B}^2_{2,1}(A_D)}^2.
\end{equation}
We will estimate the operator norm of the resolvent $(1+\mu A_D)^{-1}$ on $L^2$. 
\[
\| (1+\mu A_D)^{-1} \|_{L^2 \to L^2} \leq 1, \quad 
\| \theta_\mu \|_{L^2} \leq \| \theta \|_{L^2},
\]
which hold uniformly with respect to $\mu > 0$. Using the elementary identity
\[
\Delta(fg) = f \Delta g - (\Delta f) g + 2\nabla \cdot ((\nabla f) g),
\]
derived from the Leibniz rule, we can decompose the term $\Delta_D N_0(S_j\theta, \theta_\mu)$ as follows:
\[
\begin{split}
	\Delta_D N_0(S_j\theta, \theta_\mu) 
	&= ( \nabla^\perp \Lambda_D^{-1} S_j \theta \cdot \nabla ) \Delta \theta_\mu 
	- ( \nabla^\perp \Lambda_D^{-1} \Delta S_j \theta \cdot \nabla ) \theta_\mu \\
	&\quad + 2 \sum_{i=1}^2 \partial_{x_i} \left( ( \partial_{x_i} \nabla^\perp \Lambda_D^{-1} S_j \theta \cdot \nabla ) \theta_\mu \right) \\
	&=: I_1(\theta_\mu) + I_2(\theta_\mu) + 2 I_3 (\theta_\mu).
\end{split}
\]

\noindent 
\underline{Estimate of $I_1$}. 
By the definition \eqref{0126-1} of $\psi_j(\Lambda_D)$, we have
\[
\begin{split}
	\int_{\Omega} I_1(\theta_\mu) \psi_j(\Lambda_D) \Delta \theta_\mu \, dx 
	&= \sum_{l=0}^1 (-1)^l \int_{\Omega} I_1(\theta_\mu) (1 - 2^{-2j-2l} \Delta_D)^{-1} \Delta \theta_\mu \, dx.
\end{split}
\]
To handle the integrand, we apply the following identity to 
$I_1(\theta _\mu)$. 
\[\begin{split}
	\theta _\mu 
	=& (1-2^{-2j-2l}\Delta _D) (1-2^{-2j-2l} \Delta_D)^{-1}  \theta _\mu
	\\
	=&  \Big(1-2^{-2j-2l}  \sum _{i=1}^2 \partial _{x_i}^2 \Big) (1-2^{-2j-2l} \Delta_D)^{-1} \theta _\mu. 
\end{split}
\] 
Except for the term involving the second derivative $\partial_{x_i}^2$, we observe that 
\[
\int _{\Omega}  \Big(  (\nabla ^\perp \Lambda _D^{-1} S_j \theta \cdot \nabla)  \Theta_{j+l}  \Big) \cdot \Theta_{j+l}
~dx = 0, 
\quad \text{with } \Theta_{j+l} := (1- 2^{-2j-2l} \Delta_D)^{-1} \Delta \theta _\mu. 
\]
Using this property, we write
\[
\begin{split}
	\int _{\Omega} I_1 (\theta _\mu) \psi _j (\Lambda _D) \Delta \theta_\mu  ~dx 
	=& -\sum _{l=0}^1 (-1)^l  
	   \int _{\Omega}  \Big(  (\nabla ^\perp \Lambda _D^{-1} S_j \theta \cdot \nabla) 2^{-2j-2l} \Delta \Theta_{j+l}  \Big) \cdot \Theta_{j+l}
	~dx . 
\end{split}
\]
For each $l$, we apply integration by parts twice to obtain
\[
\begin{split}
& 	   \int _{\Omega}  \Big(  (\nabla ^\perp \Lambda _D^{-1} S_j \theta \cdot \nabla) 2^{-2j-2l} \Delta \Theta_{j+l}  \Big) \cdot \Theta_{j+l}
~dx
\\
=& 2^{-2j-2l}  \int _{\Omega}  
\Big\{ \Big( ( \Delta \nabla ^\perp \Lambda _D^{-1} S_j \theta \cdot \nabla)   \Theta_{j+l}  \Big)   \Theta_{j+l}
      +  2 \Big( ( \nabla \nabla ^\perp \Lambda _D^{-1} S_j \theta \cdot \nabla)   \Theta_{j+l} \Big)   \nabla  \Theta_{j+l}
\\
&     + \Big( (  \nabla ^\perp \Lambda _D^{-1} S_j \theta \cdot \nabla)   \Theta_{j+l} \Big)   \Delta \Theta_{j+l}
    \Big\}
 ~dx 
\end{split}
\]
Applying integration by parts to the third integrand yields the same term as on the left-hand side but with a negative sign; thus, we obtain
\[
\begin{split}
	& 	   \int _{\Omega}  \Big(  (\nabla ^\perp \Lambda _D^{-1} S_j \theta \cdot \nabla) 2^{-2j-2l} \Delta \Theta_{j+l}  \Big) \cdot \Theta_{j+l}
	~dx
	\\
	=& \dfrac{1}{2} \cdot 2^{-2j-2l}  \int _{\Omega}  
	\Big\{ \Big( ( \Delta \nabla ^\perp \Lambda _D^{-1} S_j \theta \cdot \nabla)   \Theta_{j+l}  \Big)   \Theta_{j+l}
	+  2 \Big( ( \nabla \nabla ^\perp \Lambda _D^{-1} S_j \theta \cdot \nabla)   \Theta_{j+l} \Big)   \nabla  \Theta_{j+l}
	\Big\}
	~dx . 
\end{split}
\]
We apply the $L^3 \times L^2 \times L^6$ estimate combined with Lemma~\ref{lem:0128-3}~(1) and the embedding $H^1(\Omega) \hookrightarrow L^6$ for the first integrand. For the second integrand, we use the $L^\infty \times L^2 \times L^2$ estimate. These lead to:
\[
\begin{split}
	& 	2^{-2j-2l} \Big|  \int _{\Omega}  \Big(  (\nabla ^\perp \Lambda _D^{-1} S_j \theta \cdot \nabla) 2^{-2j-2l} \Delta \Theta_{j+l}  \Big) \cdot \Theta_{j+l}
	~dx
	\Big| 
	\\
	\leq 
	& C 	2^{-2j-2l} 
	  \Big( \| S_j \theta \|_{\dot B^{2}_{2,1}(A)} \| \nabla \Theta _{j+l} \|_{L^2} \| \Theta _{j+l} \|_{H^1(\Omega)}
	      + \| S_j \theta \|_{\dot B^{2}_{2,1}(A)} \| \nabla \Theta _{j+l} \|_{L^2}^2 
	   \Big) .
\end{split}
\]
It follows from the fourth inequality in Lemma~\ref{lem:0128-4} that 
\[
\begin{split}
		 \Big|  \int _{\Omega}  \Big(  (\nabla ^\perp \Lambda _D^{-1} S_j \theta \cdot \nabla) 2^{-2j-2l} \Delta \Theta_{j+l}  \Big) \cdot \Theta_{j+l}
	~dx
	\Big| 
	\leq 
	 C \|  \theta \|_{\dot B^{2}_{2,1}(A)} \| \psi_j(\Lambda _D)^{\frac{1}{2}} \Delta \theta_\mu \|_{L^2}^2. 
\end{split}
\]
This, combined with the norm equivalence in Lemma~\ref{lem:0128-3} (2), allows us to obtain
\[
\begin{split}
	\sum _{j \in \mathbb Z} \Big| \int _{\Omega} I_1 (\theta _\mu) \psi _j (\Lambda _D) \Delta \theta _\mu ~dx \Big|  
	\cdot \dfrac{1}{\| \psi _j (\Lambda _D)^{\frac{1}{2}} \Delta \theta  \|_{L^2 }}
	\leq 
	C \| \theta \|_{\dot B^2_{2,1}(A_D)} ^2. 
\end{split}
\]

\vskip2mm

\noindent 
\underline{Estimate of $I_2$}. Using the decomposition $\theta_\mu = S_j \theta_\mu + (1 - S_j) \theta_\mu$, we write
\[
\begin{split}
	\int _{\Omega} I_2 (\theta _\mu) \psi _j (\Lambda _D) \Delta \theta_\mu  ~dx 
	=& 	\int _{\Omega} I_2 ( S_j \theta _\mu) \psi _j (\Lambda _D) \Delta \theta_\mu  ~dx 
	+ \int _{\Omega} I_2 ( (1- S_j) \theta _\mu) \psi _j (\Lambda _D) \Delta \theta_\mu  ~dx .
\end{split}
\]
For the first term, let $0 < \gamma < 1/2$. We utilize the duality $(\dot{B}^\gamma_{2,1}(A_D))^* = \dot{B}^{-\gamma}_{2,\infty}(A_D)$. Applying Proposition~\ref{prop:1121-1} to the term $I_2(S_j \theta_\mu)$, we obtain
\[
\begin{split}
&	\Big| \int _{\Omega} I_2 ( S_j \theta _\mu) \psi _j (\Lambda _D) \Delta \theta_\mu  ~dx\Big| 
\\ 
\leq 
&\| I_2 (S_j \theta _\mu ) \|_{\dot B^{\gamma}_{2,1}(A_D)} \| \psi _j (\Lambda _D) \Delta \theta _\mu  \|_{\dot B^{-\gamma} _{2,\infty}(A_D)}
\\
\leq 
& C \Big(\| S_j \theta  \|_{\dot B^{2+\gamma}_{2,1} (A_D)} \| S_j \theta _{\mu } \|_{\dot B^1_{\infty ,1}(A_D)} 
        + \| S_j \theta  \|_{\dot B^2_{2,1}(A_D)}  \| S_j \theta _\mu \|_{\dot B^{1+\gamma}_{\infty,1}(A_D)}
        \Big)
        \|  \Lambda _D^{-\gamma} \psi_j (\Lambda _D) \theta _\mu  \|_{L^2} .
\end{split}
\]
It follows from the embedding $\dot{B}^{s+1}_{2,1}(A_D) \hookrightarrow \dot{B}^s_{\infty,1}(A_D)$ for $s= 1, 1+\gamma$ , together with the uniform boundedness of the resolvent $(1 + \mu A_D)^{-1}$ on $L^2$ with respect to $\mu > 0$, that
\[
\begin{split}
	&	\Big| \int _{\Omega} I_2 ( S_j \theta _\mu) \psi _j (\Lambda _D) \Delta \theta_\mu  ~dx\Big| 
	\\ 
	\leq 
	& C \| S_j \theta  \|_{\dot B^{2+\gamma}_{2,1} (A_D)} \| S_j \theta \|_{\dot B^2_{2 ,1}(A_D)} 
	\|  \Lambda _D^{-\gamma} \psi_j (\Lambda _D) \theta  \|_{L^2} .
	\\ 
\leq 
& C \sum _{k \leq j} 2^{(2+\gamma)k} \| \phi_k(\Lambda _D) \theta  \|_{L^2} \| \theta \|_{\dot B^2_{2 ,1}(A_D)} 
\cdot 2^{-\gamma j}\|  \psi_j (\Lambda _D) ^{\frac{1}{2}}\theta  \|_{L^2} ,
\end{split}
\]
where we have utilized the spectral localization of $S_j$ and the following elementary estimates for operators on $L^2$:
\[
\psi_j(\Lambda _D) \simeq  \Big( 2^{-j} (-\Delta _D)^{\frac{1}{2}} (1- 2^{-2j} \Delta _D)^{-\frac{1}{2}} \Big) ^{2}, 
\qquad 
\eta ^{-\gamma} \psi_j (\eta) \leq C 2^{ -\gamma j} \psi _j (\eta)^{\frac{1}{2}}, \quad \eta > 0 . 
\]
We then have 
\[
\begin{split}
	&	\sum_{j \in \mathbb Z} \Big| \int _{\Omega} I_2 ( S_j \theta _\mu) \psi _j (\Lambda _D) \Delta \theta_\mu  ~dx\Big| 
	 \cdot \dfrac{1}{\| \psi_j (\Lambda _D)^{\frac{1}{2}} \Delta \theta \|_{L^2}}
	\\ 
	\leq 
	& C \sum _{j \in \mathbb Z} 2^{-\gamma j} \sum _{k \leq j} 2^{(2+\gamma)k} \| \phi_k(\Lambda _D) \theta  \|_{L^2} \| \theta \|_{\dot B^2_{2 ,1}(A_D)} 
	\\ 
\leq 
& C \| \theta \|_{\dot B^2_{2 ,1}(A_D)} ^2 . 
\end{split}
\]
As for $I_2 ((1-S_j)\theta_\mu)$, we simply estimate the term by the H\"older inequality and the property $\psi_j \leq \psi_j^{1/2}$ in the sense of operator norms.
\[
\begin{split}
	&	\sum_{j \in \mathbb Z} \Big| \int _{\Omega} I_2 ( (1-S_j) \theta _\mu) \psi _j (\Lambda _D) \Delta \theta_\mu  ~dx\Big| 
	\cdot \dfrac{1}{\| \psi_j (\Lambda _D)^{\frac{1}{2}} \Delta \theta \|_{L^2}}
	\\ 
	\leq 
	& \sum _{j \in \mathbb Z}\| \nabla ^\perp \Lambda _D ^{-1} \Delta S_j \theta  \|_{L^\infty}   \| \nabla (1-S_j) \theta _\mu  \|_{L^2  }
	\cdot \| \psi _j (\Lambda_D) \Delta \theta _\mu  \|_{L^2}
	   	\cdot \dfrac{1}{\| \psi_j (\Lambda _D)^{\frac{1}{2}} \Delta \theta \|_{L^2}}
	\\ 
	\leq 
	& C \sum _{j \in \mathbb Z} \sum _{ k \leq j} 2^{3k} \| \phi_k(\Lambda _D) \theta  \|_{L^2} 
	        \sum _{l \geq j }  2^l \| \phi_l(\Lambda _D) \theta _\mu  \|_{L^2}
\\
\leq 
& C \| \theta  \|_{\dot B^2_{2,1}(A_D)}^2 . 
\end{split}
\]

\vskip2mm

\noindent 
\underline{Estimate of $I_3$}. 
By the decomposition $\theta_\mu = S_j\theta_\mu + (1-S_j) \theta_\mu$, we have 
\[
\begin{split}
	\int _{\Omega} I_3 (\theta _\mu) \psi _j (\Lambda _D) \Delta \theta_\mu  ~dx 
	=& 	\int _{\Omega} I_3 ( S_j \theta _\mu) \psi _j (\Lambda _D) \Delta \theta_\mu  ~dx 
        + \int _{\Omega} I_3 ( (1- S_j) \theta _\mu) \psi _j (\Lambda _D) \Delta \theta_\mu  ~dx . 
\end{split}
\]
We first consider the first term. Let $0 < \gamma < 1/2$. 
It follows from Proposition~\ref{prop:1121-1} that $\Lambda_D^\gamma I_3 (S_j \theta_\mu)$ is well-defined, and we can write
\[
\begin{split}
	\int _{\Omega} I_3 ( S_j \theta _\mu) \psi _j (\Lambda _D) \Delta \theta_\mu  ~dx
	= 
	& 	\int _{\Omega} \Big(  \Lambda _D^{\gamma }I_3 ( S_j \theta _\mu) \Big) \Lambda _D^{-\gamma}  \psi _j (\Lambda _D) \Delta \theta_\mu  ~dx. 
\end{split}
\]
Specifically, $\Lambda_D^\gamma I_3 (S_j \theta_\mu)$ is expressed as
\[
\Lambda _D^{\gamma } I_3 ( S_j \theta _\mu) 
=
\Lambda _D^{\gamma } \Big( ( \Delta \nabla ^\perp \Lambda _D^{-1} S_j \theta  \cdot  \nabla ) S_j \theta _\mu \Big) 
+ 
\Lambda _D^{\gamma} \sum _{i=1}^2 \Big( ( \partial _{x_i} \nabla ^\perp \Lambda _D^{-1} S_j \theta  \cdot  \partial _{x_i}\nabla ) S_j\theta _\mu \Big) ,
\]
where we use the spectral localizations
\[
S_j\theta  = \sum _{k \leq j} \phi_k (\Lambda _D) \theta , \qquad 
S_j \theta _\mu = \sum _{l \leq j} \phi_l (\Lambda _D) \theta _\mu .
\]
By applying the bilinear estimate in Proposition~\ref{prop:1121-1} and the embedding $\dot{B}^{s+1}_{2,1}(A_D) \hookrightarrow \dot{B}^s_{\infty,1}(A_D)$ for $s=1$ and $s=1+\gamma$, we estimate the term as follows:
\[
\begin{split}
&	\Big| 	\int _{\Omega} I_3 ( S_j \theta _\mu) \psi _j (\Lambda _D) \Delta \theta_\mu  ~dx
     \Big|
\\
\leq 
& C \| \Lambda _D^{\gamma } I_3 (S_j \theta _\mu) \|_{L^2} 
      \| \Lambda _D^{-\gamma} \psi _j(\Lambda _D) \Delta  \theta _\mu  \|_{L^2}
\\
\leq 
& C \sum _{k, l \leq j} 
    \Big(  \| \phi_k(\Lambda _D) \theta  \|_{\dot B^{2+\gamma}_{2,1}(A_D)} \| \phi_l(\Lambda _D) \theta _\mu \|_{\dot B^2_{2 ,1}(A_D)}   
            + \| \phi_k (\Lambda _D)\theta  \|_{\dot B^2_{2,1}(A_D)} \| \phi_l(\Lambda_D)\theta _\mu  \|_{\dot B^{2+\gamma }_{2,1}(A_D)}  \Big) 
 \\
 & \cdot \Big\| \Lambda _D ^{-\gamma } \psi_j (\Lambda _D) ^{\frac{1}{2}} \Big( \psi_j (\Lambda _D)^{\frac{1}{2}} \Delta \theta _\mu \Big)  \Big\| _{L^2}
\\
\leq 
& C\Big(  \sum _{k, l \leq j} (2^{(2+\gamma)k} 2^{2l} + 2^{2k} 2^{(2+\gamma )l} )
   \| \phi_k(\Lambda _D) \theta  \|_{L^2} \| \phi_l(\Lambda _D) \theta _\mu \|_{L^2}
   \Big) 
  \cdot 2^{-\gamma j}  \| \psi_j (\Lambda _D)^{\frac{1}{2}} \Delta \theta _\mu \|_{L^2}.
\end{split}
\]
By dividing by $\| \psi_j (\Lambda_D)^{\frac{1}{2}} \Delta \theta \|_{L^2}$ and taking the sum over $j \in \mathbb{Z}$, we obtain
\[
\sum _{j \in \mathbb Z} 
\Big| 	\int _{\Omega} I_3 ( S_j \theta _\mu) \psi _j (\Lambda _D) \Delta \theta_\mu  ~dx
\Big|
\cdot \dfrac{1}{\| \psi_j (\Lambda _D)^{\frac{1}{2}} \Delta \theta  \|_{L^2}}
\leq C \| \theta  \|_{\dot B^2_{2,1}(A_D)} ^2 . 
\]

As for $I_3 ((1-S_j) \theta_\mu)$, we apply the symmetry of $\psi_j (\Lambda_D)^{1/2}$ and the boundedness of $\psi_j (\Lambda_D)^{1/2} \nabla$ on $L^2$ with the bound $C 2^j$. We then have
\begin{equation}\label{1126-2}
\begin{split}
&
	\Big| \int _{\Omega} I_3 ( (1- S_j) \theta _\mu) \psi _j (\Lambda _D) \Delta \theta_\mu  ~dx\Big| 
\\
\leq
& \| \psi _j (\Lambda _D) ^{\frac{1}{2}} I_3 ((1-S_j) \theta _\mu ) \|_{L^2} 
      \| \psi _j (\Lambda _D) ^{\frac{1}{2} } \Delta \theta _\mu  \|_{L^2}
\\
\leq 
& C 2^j \sum _{i=1}^2 \| (\partial _{x_i} \nabla ^\perp \Lambda _D^{-1} S_j\theta \cdot \nabla ) (1-S_j)\theta _\mu \|_{L^2} 
      \| \psi _j (\Lambda _D) ^{\frac{1}{2} } \Delta \theta _\mu  \|_{L^2}
\\
\leq 
& C 2^j \sum _{k \leq j} 2^k \| \phi_k(\Lambda _D)\theta \|_{L^\infty} \sum _{l >j} 2^l \| \phi_l(\Lambda _D)\theta _\mu \|_{L^2}
 \cdot \| \psi _j (\Lambda _D) ^{\frac{1}{2} } \Delta \theta _\mu  \|_{L^2}
\\
\leq 
& C 2^j \| \theta \|_{\dot B^2_{2,1}(A_D)} \sum _{l >j} 2^l \| \phi_l(\Lambda _D)\theta _\mu \|_{L^2}
\cdot \| \psi _j (\Lambda _D) ^{\frac{1}{2} } \Delta \theta _\mu  \|_{L^2},
\end{split}
\end{equation}
where we have used the estimate related to the continuous embedding $\dot B^2_{2,1}(A_D) \hookrightarrow \dot B^1_{\infty,1}(A_D)$. 
By dividing by $\| \psi_j (\Lambda_D)^{\frac{1}{2}} \Delta \theta \|_{L^2}$ and taking the sum over $j \in \mathbb{Z}$, we obtain
\[
\begin{split}
	&
		\sum_{j \in \mathbb Z} \Big| \int _{\Omega} I_3 ( (1- S_j) \theta _\mu) \psi _j (\Lambda _D) \Delta \theta_\mu  ~dx\Big| 
		\cdot \dfrac{1}{\| \psi _j(\Lambda _D)^{\frac{1}{2}} \Delta \theta  \|_{L^2}} 
	\\
	\leq 
	& C \| \theta  \|_{\dot B^2_{2,1}(A_D)}  \sum _{j \in \mathbb Z} 2^j \sum _{l > j} 2^l \| \phi_l(\Lambda _D)\theta _\mu  \|_{L^2}
	\\
	\leq 
	& C \| \theta  \|_{\dot B^2_{2,1}(A_D)} ^2 .
\end{split}
\]
This completes the proof of \eqref{1120-1}.

\vskip2mm 

\noindent 
Step 2. 
We prove the second inequality \eqref{1118-2}. Similarly to \eqref{1120-1}, it suffices to show that
\begin{equation}\label{1126-1}
	\sum _{j \in \mathbb Z} 
	\Big| \int _{\Omega} \Big( \Delta _D N_0 \big(  (1-S_j) \theta,  \theta _\mu \big) \Big)
	\psi _j (\Lambda _D)\Delta  \theta _\mu ~dx 
	\Big| \cdot \dfrac{1}{\| \psi_j (\Lambda _D)^{\frac{1}{2}}  \Delta \theta \|_{L^2}}
	\leq C \| \theta  \|_{\dot B^2_{2,1}(A_D)} ^2 . 
\end{equation}
Analogously to the estimate \eqref{1126-2}, we utilize the boundedness of $\psi_j (\Lambda_D)^{1/2} \nabla$ with the bound $C 2^j$. By applying the Leibniz rule to the first-order derivatives, the H\"older inequality, and the estimate associated with the continuous embedding $\dot{B}^2_{2,1}(A_D) \hookrightarrow \dot{B}^1_{\infty,1}(A_D)$, we obtain
\[
\begin{split}
& 
	\Big| \int _{\Omega} \Big( \Delta _D N_0 \big(  (1-S_j) \theta,  \theta _\mu \big) \Big)
\psi _j (\Lambda _D)\Delta  \theta _\mu ~dx 
\Big| 
\\
\leq 
&C2^j \sum_{k >j} 2^k \| \phi_k(\Lambda _D)\theta \|_{L^2}   
    \| \theta_\mu \|_{\dot B^2_{2,1}}
    \cdot \| \psi_j (\Lambda _D)^{\frac{1}{2}}  \Delta \theta_\mu \|_{L^2}. 
\end{split}
\]
Summing over $j \in \mathbb{Z}$ after dividing by $\| \psi_j (\Lambda_D)^{1/2} \Delta \theta \|_{L^2}$, we conclude that \eqref{1126-1} holds.
\end{pf}

\begin{prop}\label{prop:1010-5}
		Let $N_\mu$ be defined by \eqref{1126-3} for $\mu > 0$. 
We consider the following integral equation:
	\begin{equation}\label{1010-2}
		\theta(t) = \theta_0 - \int_0^t N_\mu (\theta (\tau) , \theta (\tau))  ~ d\tau.
	\end{equation}
	For every $\theta_0 \in \dot{B}^2_{2,1}(A_D)$, there exists a time $T > 0$ such that the integral equation \eqref{1010-2} has a unique solution $\theta^{(\mu)} \in C([0,T], \dot{B}^2_{2,1}(A_D))$. 
	Furthermore, the existence time $T$ depends only on $\| \theta_0 \|_{\dot{B}^2_{2,1}(A_D)}$ and is independent of $\mu > 0$. In particular, the solutions $\{ \theta^{(\mu)} \}_{\mu > 0}$ satisfy the uniform estimate
	\begin{equation}\label{0219-2}
	\sup_{\mu > 0} \sum_{j \in \mathbb{Z}} 2^{2j} \| \phi_j(\Lambda_D) \theta^{(\mu)} \|_{L^\infty(0,T; L^2)} < \infty.
\end{equation}
\end{prop}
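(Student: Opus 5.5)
The plan is a two-stage argument. For fixed $\mu>0$, the integral equation \eqref{1010-2} is an ODE in a Banach space, which I solve by Picard iteration. Afterwards, a $\mu$-independent a priori estimate built on Lemma~\ref{lem:1126-5} gives both the uniform existence time and \eqref{0219-2}.

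\emph{Step 1 (local Lipschitz bound for fixed $\mu$).} I would show that $N_\mu:\dot B^2_{2,1}(A_D)\times \dot B^2_{2,1}(A_D)\to \dot B^2_{2,1}(A_D)$ is bounded bilinear, with a constant $C_\mu$. Write $N_\mu(\theta,\widetilde\theta)=(1+\mu A_D)^{-1}\big((\nabla^\perp\Lambda_D^{-1}\theta)\cdot\nabla\widetilde\theta_\mu\big)$. For $0<\gamma<1/2$, Proposition~\ref{prop:1121-1} with $\alpha=\beta=1$ and $\dot B^{s+1}_{2,1}\hookrightarrow \dot B^{s}_{\infty,1}$ gives that the product lies in $\dot B^\gamma_{2,1}(A_D)$. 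Its norm is bounded by $C\|\theta\|_{\dot B^{1+\gamma}_{2,1}\cap \dot B^{2}_{2,1}}\|\widetilde\theta_\mu\|_{\dot B^{2+\gamma}_{2,1}\cap\dot B^{2}_{2,1}}$. These norms are controlled by $C_\mu\|\theta\|_{\dot B^2_{2,1}}\|\widetilde\theta\|_{\dot B^2_{2,1}}$, using the positivity of the spectrum (Proposition~\ref{prop:0128-2}(4)) and the smoothing $\|(1+\mu A_D)^{-1}\|_{\dot B^{s}\to\dot B^{s+2}}\le C\mu^{-1}$. The outer resolvent then maps $\dot B^\gamma_{2,1}$ into $\dot B^{2+\gamma}_{2,1}\subset\dot B^2_{2,1}$. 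Banach's fixed point theorem on a ball of $C([0,T_\mu],\dot B^2_{2,1})$ then yields a unique local solution, with $\theta\in C^1([0,T_\mu],\dot B^2_{2,1})$. Uniqueness in the full class follows from the same Lipschitz bound and Gronwall. The solution can be continued as long as $\|\theta(t)\|_{\dot B^2_{2,1}}$ stays bounded.

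\emph{Step 2 (uniform a priori estimate).} I would use the equivalent norm of Lemma~\ref{lem:0128-3}(2) with $s=2$, $q=1$:
\[
X(t):=\sum_j\|\psi_j(\Lambda_D)^{1/2}\Delta\theta(t)\|_{L^2}\simeq\|\theta(t)\|_{\dot B^2_{2,1}}.
\]
Since $\theta$ is $C^1$ in time and $N_\mu(\theta,\theta)\in D(A_D)$, applying $\psi_j^{1/2}\Delta$ to the equation and pairing gives
\[
\tfrac12\tfrac{d}{dt}\|\psi_j^{1/2}\Delta\theta\|_{L^2}^2=-\int_\Omega \Delta N_\mu(\theta,\theta)\,\psi_j(\Lambda_D)\Delta\theta\,dx .
\]
I split the first argument as $\theta=S_j\theta+(1-S_j)\theta$. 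Writing $a_j=(\varepsilon+\|\psi_j^{1/2}\Delta\theta\|^2)^{1/2}$ to avoid dividing by zero, I get $\frac{d}{dt}a_j\le$ (the two quotients in Lemma~\ref{lem:1126-5}). For the second quotient I use $\|\psi_j\Delta\theta\|_{L^2}\le\|\psi_j^{1/2}\Delta\theta\|_{L^2}$ from Lemma~\ref{lem:0128-4}, so dividing by the smaller quantity only enlarges the bound. Integrating on $[0,t]$ and taking $\sup_t$ \emph{before} summing over $j$ gives, after $\varepsilon\to0$,
\[
Y(T):=\sum_j\sup_{0\le t\le T}\|\psi_j^{1/2}\Delta\theta(t)\|_{L^2}\le X(0)+C\int_0^T X(\tau)^2\,d\tau\le C_0\|\theta_0\|_{\dot B^2_{2,1}}+CT\,Y(T)^2 ,
\]
with $C$ independent of $\mu$.

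\emph{Step 3 (conclusion).} A continuity argument gives $Y(T)\le 2C_0\|\theta_0\|_{\dot B^2_{2,1}}$ for $T=c/\|\theta_0\|_{\dot B^2_{2,1}}$, independent of $\mu$. By the blow-up criterion of Step~1, the solution therefore exists on $[0,T]$. Finally, $\phi_j(\Lambda_D)\Delta\theta=\big(\phi_j\psi_j^{-1/2}\big)(\Lambda_D)\,\psi_j^{1/2}\Delta\theta$, and by Lemma~\ref{lem:0128-1} the operator $\phi_j\psi_j^{-1/2}$ is uniformly bounded. This yields $2^{2j}\|\phi_j\theta\|_{L^\infty_TL^2}\le C\|\psi_j^{1/2}\Delta\theta\|_{L^\infty_TL^2}$, and summing gives \eqref{0219-2}.

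I expect the main obstacle to be the rigorous justification of the energy identity in Step~2, together with the use of Lemma~\ref{lem:1126-5} inside the time integral. Concretely, this means checking that $\Delta N_\mu$ makes sense and that the pairing is differentiable in time for the $\mu$-solution. The $\mu$-smoothing gives enough regularity for this, but it must not enter the constants.
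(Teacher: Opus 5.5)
Your proposal is correct and follows the paper's proof step by step: a $\mu$-dependent bound $\|N_\mu(\theta,\widetilde\theta)\|_{\dot B^2_{2,1}(A_D)}\le C_\mu\|\theta\|_{\dot B^2_{2,1}(A_D)}\|\widetilde\theta\|_{\dot B^2_{2,1}(A_D)}$, obtained from Proposition~\ref{prop:1121-1} and resolvent smoothing, gives the fixed point; the $\psi_j(\Lambda_D)$-energy estimate with Lemma~\ref{lem:1126-5}, taking the supremum in time before summing over $j$, then gives the $\mu$-independent time and \eqref{0219-2}. Your extra details are sound refinements of what the paper leaves implicit: the $C^1$-in-time solution that justifies the energy identity, the comparison $\|\psi_j(\Lambda_D)\Delta\theta\|_{L^2}\le\|\psi_j(\Lambda_D)^{1/2}\Delta\theta\|_{L^2}$ needed to use \eqref{1118-2}, and the explicit passage from $\psi_j(\Lambda_D)^{1/2}\Delta$ back to $\phi_j(\Lambda_D)$. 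Just make sure the limit $\varepsilon\to0$ is taken for each fixed $j$ before summing, since $\sum_j\sqrt{\varepsilon}$ diverges.
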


\begin{pf}
\noindent 
Step 1.  (The existence of a solution with $T$ depending on $\mu$) 
Let $\theta _0 \in \dot B^2_{2,1}(A_D)$ and $T > 0$. For $\theta \in L^\infty(0, T; \dot{B}^2_{2,1}(A_D))$, we define  $\Psi = \Psi(\theta)$ by
\[
\Psi(\theta)(t) = \theta_0 - \int_0^t N_\mu(\theta, \theta)(\tau) \, d\tau,
\]
where $N_\mu(\theta, \tilde{\theta})$ is defined as in \eqref{1126-3}. We aim to find a fixed point of $\Psi$ by applying the Banach fixed point theorem in the space $X_T := L^\infty (0,T ; \dot B^2_{2,1}(A_D))$ equipped with the norm
\[
\| \theta \|_{X_T} := \sup_{t \in [0,T]} \| \theta(t) \|_{\dot{B}^2_{2,1}(A_D)}.
\]

We claim the following bilinear estimate:
\begin{equation}\label{0216-1}
\| N_\mu (\theta , \widetilde \theta ) \|_{\dot B^2_{2,1}(A_D)} 
\leq C \mu ^{-1+\frac{\gamma}{2}} \| \theta  \|_{\dot B^2_{2,1}(A_D)} \| \widetilde{\theta} \|_{\dot B^2_{2,1})(A_D)},
\end{equation}
where $C$ is independent of $\mu > 0$. Let $0 < \gamma < 1/2$. We note that
\[
\begin{split}
	&\| (1+\mu A_D)^{-1} \|_{L^2 \to L^2} \leq 1 , 
\\
& 
\| A_D ^{1- \frac{\gamma}{2}} (1 + \mu A_D)^{-1} \|_{L^2 \to L^2} 
\leq C \mu ^{-1 + \frac{\gamma}{2}}, 
\end{split}
\]
which, combined with Proposition~\ref{prop:1121-1}, imply that
\[
\begin{split}
	\| N_\mu (\theta , \widetilde\theta) \|_{\dot B^2_{2,1}(A_D)}
\leq 
& C \mu ^{-1+\frac{\gamma}{2}}
\Big\| \Lambda _D^{\gamma} 
 \Big(  (\nabla ^\perp \Lambda _D^{-1} \theta \cdot \nabla ) (1+\mu A_D)^{-1} \widetilde \theta 
 \Big)  \Big\|_{\dot B^0_{2,1}(A_D)}
\\
\leq 
& C \mu ^{-1+\frac{\gamma}{2}}
\big( \| \theta \|_{\dot B^\gamma_{2,1}(A_D)} \| \widetilde{\theta} \|_{\dot B^1_{\infty,1}(A_D)} 
+ \| \theta \|_{\dot B^0_{\infty,1}(A_D)} \| \widetilde \theta \|_{\dot B^{1+\gamma}_{2,1}(A_D)}
\big) .
\end{split}
\]
Applying the embeddings $\dot{B}^2_{2,1}(A_D) \hookrightarrow \dot{B}^{s+\gamma}_{2,1}(A_D)$ and $\dot{B}^2_{2,1}(A_D) \hookrightarrow \dot{B}^s_{\infty,1}(A_D)$ for $s = 0, 1$, we obtain \eqref{0216-1}.

It follows from \eqref{0216-1} that 
\[
\| \Psi (\theta) \|_{X_T} 
\leq \| \theta_0 \|_{\dot B^2_{2,1}(A_D)} 
+ \Big\| \int_0^t N_\mu (\theta, \theta)  d\tau \Big\|_{X_T}  
\leq \| \theta_0 \|_{\dot B^2_{2,1}}   
+ C \mu ^{-1+\frac{\gamma}{2}}T \| \theta \|_{X_T} ^2. 
\]
Similarly, for the difference between two terms, we have
\[
\Big\| \int_0^t N_\mu (\theta, \theta)  d\tau  - \int_0^t N_\mu (\widetilde \theta, \widetilde \theta)  d\tau \Big\|_{X_T} 
\leq C \mu ^{-1+\frac{\gamma}{2}}T 
 (\| \theta \|_{X_T} + \| \widetilde \theta \|_{X_T}) 
   \| \theta - \widetilde \theta \|_{X_T}. 
\]
By a standard fixed point argument, there exists a unique local solution $\theta = \theta^{(\mu)}$ in $L^\infty (0, T; \dot{B}^2_{2,1}(A_D))$, where the existence time $T$ depends on $\mu > 0$.

We also obtain the time continuity, i.e., $\theta^{(\mu)} \in C([0,T]; \dot B^2_{2,1}(A_D))$, 
from the integrability that 
\[
\int _0^T \| N_\mu (\theta , \theta ) \|_{\dot B^2_{2,1}(A_D)} ~dt < \infty 
\quad \text{for } \theta \in L^\infty (0,T ; \dot B^2_{2,1}(A_D)) . 
\]

\vskip3mm 

\noindent Step 2.  
Let $\theta ^{(\mu)}$ be the solution constructed in Step 1. 
To establish a uniform existence time, we will show that
\begin{equation}\label{1126-6}
\| \theta^{(\mu)} (t) \|_{\dot B^2_{2,1}(A_D)} \leq C  \| \theta _ 0 \|_{\dot B^2_{2,1}(A_D)} 
+ C T \| \theta^{(\mu)}  \|_{X_T}^2 ,
\end{equation}
where $C$ is a positive constant independent of $\mu > 0$. Once \eqref{1126-6} is established, a standard argument ensures the existence of a time $T > 0$ satisfying
\[
T \geq \dfrac{C}{\| \theta _0 \|_{\dot B^2_{2,1}(A_D)}},
\]
where $C$ is independent of $\mu$. Therefore, it suffices to prove \eqref{1126-6}.

Since $N_\mu (\theta , \theta)$ contains the resolvent $(1+\mu A_D)^{-1}$, it is easy to see that $\partial _t \Delta \theta (t)$ makes sense and 
belongs to $L^\infty (0,T ; L^2)$ at least. 
Applying $\Delta$ to both sides of \eqref{1010-2}, we have
\[
\partial _t \Delta \theta ^{(\mu)} + \Delta N_\mu (\theta^{(\mu)} , \theta ^{(\mu)}) = 0. 
\]
For each $j \in \mathbb{Z}$, multiplying the equation by $\psi_j (\Lambda_D) \Delta \theta^{(\mu)}$, where $\psi_j(\Lambda_D)$ is the operator defined via the resolvent, and integrating over the domain $\Omega$, we obtain
\[
\begin{split}
\dfrac{1}{2} \partial _t \|  \psi _j (\Lambda _D)^{\frac{1}{2}} \Delta \theta ^{(\mu)} \|_{L^2} ^2 
	\leq  
	\Big|\int _{\Omega} \Big( \Delta  N_\mu (\theta^{(\mu)} , \theta^{(\mu)}) \Big) \psi _j (\Lambda _D) \Delta \theta^{(\mu)} ~dx \Big| .
\end{split}
\]
Dividing both sides by $\| \psi _j (\Lambda _D)^{1/2} \Delta \theta ^{(\mu)}\|_{L^2}$ and integrating with respect to the time variable, we obtain
\begin{equation}\label{0219-1}
\begin{split}
&	\dfrac{1}{2} \|  \psi _j (\Lambda _D)^{\frac{1}{2}} \Delta \theta ^{(\mu)} (t)\|_{L^2} 
\\
	\leq  
&	\dfrac{1}{2} \|  \psi _j (\Lambda _D)^{\frac{1}{2}} \Delta \theta_0\|_{L^2} 
\\
&	+ \int_0 ^t 
	  \Big|  \int _{\Omega} \Big( \Delta  N_\mu (\theta^{(\mu)} , \theta^{(\mu)}) \Big) \psi _j (\Lambda _D) \Delta \theta^{(\mu)}~dx\Big| 
	\cdot \dfrac{1}{\| \psi _j (\Lambda _D)^{\frac{1}{2}} \Delta \theta ^{(\mu)} \|_{L^2}} 
	~d\tau .
\end{split}
\end{equation}
By writing $N_\mu (\theta^{(\mu)} , \theta^{(\mu)}) = N_\mu (S_j \theta^{(\mu)} , \theta^{(\mu)}) + N_\mu ( (1-S_j) \theta^{(\mu)} , \theta^{(\mu)})$, taking the sum over $j \in \mathbb Z$, and applying Proposition~\ref{lem:1126-5}, we see that
\[
\sum _{j \in \mathbb Z}\| \psi _j (\Lambda _D) ^{\frac{1}{2}} \Delta \theta^{(\mu)} (t) \|_{L^2} 
\leq \sum _{j \in \mathbb Z} \| \phi_j(\Lambda _D)^{\frac{1}{2}}\Delta \theta _0 \|_{L^2}
+ C \int_0^t \| \theta^{(\mu)} (\tau) \|_{\dot B^2_{2,1}(A_D)} ^2 ~d\tau . 
\]
We conclude \eqref{1126-6} because the norm defined by $\psi_j(\Lambda _D)^{1/2} \Delta$ is equivalent to the original one by Lemma~\ref{lem:0128-3} (2).

\vskip3mm 

\noindent 
Step 3. By taking the supremum of both sides of \eqref{0219-1} with respect to $t \in [0,T]$ before summing over $j \in \mathbb Z$, we obtain 
\[
\begin{split}
\sum _{j \in \mathbb Z}\| \psi _j (\Lambda _D) ^{\frac{1}{2}} \Delta \theta ^{(\mu)} \|_{L^\infty (0,T ; L^2)} 
\leq 
& \sum _{j \in \mathbb Z} \| \phi_j(\Lambda _D)^{\frac{1}{2}}\Delta \theta _0 \|_{L^2}
+ C \int_0^T \| \theta^{(\mu)} (\tau) \|_{\dot B^2_{2,1}(A_D)} ^2 ~d\tau . 
\\
\leq 
& C \| \theta _0 \|_{\dot B^2_{2,1}(A_D)} 
  + C \| \theta ^{(\mu)} \|_{L^\infty (0,T; \dot B^2_{2,1}(A_D))}^2 < \infty ,
\end{split}
\]
which proves \eqref{0219-2}. 
\end{pf}

\vskip3mm 

\noindent 
{\bf Proof of Theorem~\ref{thm:1}. } 
Let $\theta_0 \in \dot B^2_{2,1}(A_D)$ and $\mu > 0$. It follows from Proposition~\ref{prop:1010-5} that there exists a solution $\theta ^{(\mu)} \in C([0,T]; \dot B^2_{2,1}(A_D))$ to the equation $\partial _t \theta ^{(\mu)} + N_\mu (\theta^{(\mu)}, \theta^{(\mu)}) = 0$ with the initial condition $\theta ^{(\mu)}(0) = \theta _0$. Here, the time $T > 0$ is independent of $\mu$, and the solution satisfies the uniform bound \eqref{0219-2}.

We establish the existence of a subsequence of $\{ \theta ^{(\mu)} \}_{\mu > 0}$ that converges in the weak-$*$ topology of $\widetilde L^\infty (0,T ; \dot B^2_{2,1}(A_D))$, where 
\[
\widetilde L^\infty (0,T ; \dot B^2_{2,1}(A_D)) 
:= 
\Big\{
\theta \in L^\infty (0,T ; \dot B^2_{2,1}(A_D)) \, \Big| \,  
\sum _{j \in \mathbb Z} 2^{2j} \| \phi_j(\Lambda _D) \theta \|_{L^\infty (0,T ; L^2)}
< \infty
\Big\}. 
\]
Let $c_0 (\mathbb Z ; L^1 (0,T ; L^2))$ be the space of sequences $\{f_j\}_{j \in \mathbb Z}$ of measurable functions on $(0,T ) \times \Omega$ satisfying 
\[
\sup _{ j \in \mathbb Z} \| f_j \|_{L^1 (0,T ; L^2)} < \infty 
\quad \text{and} \quad 
\lim_{|j| \to \infty}  \| f_j \|_{L^1(0,T; L^2)} = 0 . 
\]
We consider the mapping 
\[
\widetilde L^\infty (0,T ; \dot B^2_{2,1}(A_D)) 
\ni \theta \quad \mapsto \quad 
\{2^{2j} \phi_j (\Lambda _D)\theta \}_{j \in \mathbb Z} 
\in \ell ^1 (\mathbb Z ; L^\infty (0,T ; L^2)).
\]
This map is an isometry, and its range is a closed subspace of $\ell^1 (\mathbb Z ; L^\infty (0,T ; L^2))$. 
We note that $\widetilde L^\infty (0,T ; \dot B^2_{2,1}(A_D))$ can be identified with the dual of a closed subspace of $c_0 (\mathbb Z ; L^1 (0,T ; L^2))$. Since this predual is separable, the Banach-Alaoglu theorem ensures that for any bounded sequence in $\widetilde L^\infty (0,T; \dot B^2_{2,1}(A_D))$, there exists a subsequence that converges in the weak-$*$ topology.

We can then find a sequence $\{\mu _N\}_{ N \in \mathbb N}$ of positive numbers with $\mu _N \to 0 $ as $N \to \infty$ and a limit function $\theta$ such that $\theta^{(\mu_N)}$ converges to $\theta$ in the weak-$*$ topology of $\widetilde L^\infty (0,T ; \dot B^2_{2,1}(A_D))$. We also note that 
\begin{equation}\label{0220-1}
	\sup _{0 < \mu \leq 1} 
	\Big( \| \partial _t \theta^{(\mu)}  \|_{L^\infty (0,T ; L^2)} 
	+ \|  \nabla ^\alpha \theta ^{(\mu)}\|_{L^\infty (0,T; L^2)} 
	\Big) 
	< \infty , \quad \alpha = 0, 1, 2, 
\end{equation}
which, combined with compactness arguments in Sobolev spaces on the bounded domain $(0,T) \times \Omega$, ensures the strong convergence of a subsequence of $\{ \theta ^{(\mu_N)}\}_{N \in \mathbb N}$ in $L^\infty (0,T ; L^2)$. 

The bound \eqref{0220-1} is proved as follows. The estimate for the time derivative is obtained directly from the equation. Indeed, by applying Lemma~\ref{lem:0128-3} (1) and Proposition~\ref{prop:0128-2} (4), we have 
\[
\begin{split}
	\| \partial _t \theta ^{(\mu)}  \|_{L^2}
	&= \| N_\mu (\theta ^{(\mu)} , \theta ^{(\mu)} ) \|_{L^2}
	\leq \| \nabla ^\perp \Lambda _D^{-1} \theta ^{(\mu)} \|_{L^\infty} 
	\| \nabla \theta ^{(\mu)} \|_{L^2}
	\\
	&\leq 
	C \| \theta ^{(\mu)} \|_{\dot B^1_{2,1}(A_D)} ^2 
	\leq C \| \theta ^{(\mu)}  \|_{\dot B^2_{2,1}(A_D)} ^2 ,
\end{split}
\]
where the right-hand side is uniformly bounded with respect to $\mu > 0$ by Proposition~\ref{prop:1010-5}. The spatial derivatives are also bounded as 
\[
\| \nabla^\alpha  \theta^{(\mu)} \|_{L^2} 
\leq C \| \theta ^{(\mu)}  \|_{\dot B^\alpha _{2,1}(A_D)} 
\leq C \| \theta ^{(\mu)}  \|_{\dot B^2_{2,1}(A_D)}, 
\quad \alpha = 0, 1, 2. 
\]
Therefore, by passing to a further subsequence if necessary, we may assume that $\{ \theta ^{(\mu_N)} \}$ and $\{ \nabla \theta ^{(\mu_N)} \}$ converge to $\theta$ and $\nabla \theta$, respectively, in $L^\infty (0,T ; L^2)$ as $N \to \infty$.

Next, we take the limit in the integral equation
\[
\theta ^{(\mu)}  = \theta _ 0 - 
\int _0^t N_\mu (\theta ^{(\mu)}, \theta ^{(\mu)}) ~d\tau  
\quad \text{in } L ^\infty (0,T ; L^2). 
\]
The convergence established in the previous argument ensures that the limit function $\theta$ satisfies 
\begin{gather}\notag 
	\theta  = \theta _ 0 +
	\int _0^t N_0 (\theta , \theta) ~d\tau  
	\quad \text{in } L ^\infty (0,T ; L^2), 
	\\ \label{0220-5}
	\sum _{j \in \mathbb Z} 2^{2j} \| \phi_j(\Lambda _D) \theta  \|_{L^\infty (0,T ; L^2)} 
	< \infty .
\end{gather}
Since $\theta$ satisfies the above integral equation, it follows that $\theta \in C([0,T ], L^2)$. Thus, we obtain a solution $\theta \in L^\infty (0,T ; \dot B^2_{2,1}(A_D)) \cap C([0,T], L^2)$.

We now prove the time continuity in $C([0,T], \dot B^2_{2,1}(A_D))$. Let $\varepsilon > 0$ and $t_0 \in [0,T]$. To show the continuity at $t = t_0$, we first note from \eqref{0220-5} that there exists $j_0 \in \mathbb N$ such that
\[
\sum _{|j| > j_0} 2^{2j} \| \phi_j(\Lambda _D) \theta  \|_{L^\infty (0,T; L^2)} < \varepsilon .
\] 
From the continuity $\theta \in C([0,T], L^2)$, there exists $\delta > 0$ such that $|t-t_0| < \delta$ implies 
\[
2^{2j_0}\| \theta (t) - \theta (t_0) \|_{L^2} < \varepsilon.
\]
Therefore, for any $t$ satisfying $|t-t_0| < \delta$, we have
\[
\begin{split}
	\| \theta (t) - \theta(t_0)  \|_{\dot B^2_{2,1}(A_D)}
	&= \Big( \sum _{|j| > j_0} + \sum _{|j| \leq j_0} \Big) 2^{2j} \| \phi_j(\Lambda _D) (\theta(t) - \theta (t_0)) \|_{L^2} \\
	&\leq 2\varepsilon + \sum _{|j| \leq j_0} 2^{2j}  \cdot C \| \theta (t) - \theta (t_0) \|_{L^2} \\
	&\leq 2\varepsilon + C 2^{2 j_0} \| \theta (t) - \theta (t_0) \|_{L^2} < 2 \varepsilon + C \varepsilon,
\end{split}
\]
which implies the time continuity in $C([0,T], \dot B^2_{2,1}(A_D))$. 
We also observe that $\partial_t \theta \in C([0,T]; \dot B^1_{2,1}(A_D))$. 
In fact, by applying the bilinear estimate
\[
\| (\nabla^{\perp} \Lambda_D^{-1} f \cdot \nabla) g \|_{\dot B^1_{2,1}(A_D)}
\leq C \| f \|_{\dot B^2_{2,1}(A_D)} \| g \|_{\dot B^2_{2,1}(A_D)},
\]
together with the relation $\partial_t \theta = -(\nabla^{\perp} \Lambda_D^{-1} \theta \cdot \nabla) \theta$, we can ensure that the solution $\theta$ belongs to $C^1([0,T]; \dot B^1_{2,1}(A_D))$.

Finally, we prove the uniqueness of the solution. Let $\theta$ satisfy 
\[
\theta \in C([0,T]; W^{1,\infty} (\Omega)) \cap C^1 ([0,T]; L^2), 
\quad 
\partial _t \theta  + (\nabla ^\perp \Lambda _D^{-1} \theta \cdot \nabla) \theta = 0 , 
\quad \theta (0) = \theta _0.
\]
Suppose that $\widetilde{\theta} \in C([0,T]; W^{1,\infty} (\Omega)) \cap C^1 ([0,T]; L^2)$ is another solution with the same initial data. Then the difference $w := \theta - \widetilde{\theta}$ satisfies
\[
\partial _t w + (\nabla ^\perp \Lambda _D^{-1}w \cdot \nabla ) \theta 
+ (\nabla ^\perp \Lambda _D^{-1} \widetilde{\theta} \cdot \nabla) w 
= 0.
\]
Taking the $L^2$ inner product with $w$, we have
\[
\frac{1}{2} \frac{d}{dt} \| w \|_{L^2} ^2 
= - \int _{\Omega} \Big( \big( \nabla ^\perp \Lambda _D^{-1}w \cdot \nabla  \big) \theta \Big) \cdot w ~dx
\leq C \| \nabla \theta \| _{L^\infty} \| w \|_{L^2}^2, 
\]
where we used the fact that 
\[ 
\int_{\Omega} ((\nabla^\perp \Lambda_D^{-1} \widetilde{\theta} \cdot \nabla) w) \cdot w ~dx = 0. 
\] 
In the proof of the above property, we also note  that $\Lambda _D ^{-1} \widetilde \theta \in H^1_0 (\Omega)$  and that an approximation by smooth functions with compact support plays an important role to justify integration by parts. 
By Gronwall's inequality, we conclude that $w = 0$ on $[0,T]$, which proves the uniqueness.
\hfill $\qed$

\vskip10mm

%\noindent
%{\bf Acknowledgements. }
%The author was supported by the Grant-in-Aid for Young Scientists (A) (No.~17H04824)
%from JSPS. 
%

\vskip3mm 
%
%\noindent 
%{\bf Compliance with Ethical Standards}
%\begin{enumerate}
%\item
%\noindent 
%{\bf Conflict of Interest. }
%The author declares that he has no conflict of interest. 
%\item 
%Research involving human participants and/ or animals:
%
%This paper does not contain any studies with human participants or animals performed by any of the authors. 
%\item 
%Informed consent: 
%
%Informed consent was obtained from all individual participants included in the study.
%\end{enumerate}

\end{document}